\numberwithin{equation}{section}
\newtheorem{theorem}{Theorem}[section]
\newtheorem{remark}[theorem]{Remark}
\newtheorem{lemma}[theorem]{Lemma}
\newtheorem{proposition}[theorem]{Proposition}
\newtheorem{definition}[theorem]{Definition}
\newcommand{\E}{\mathbf{E}}
\newcommand{\N}{\mathbf{N}}
\newcommand{\Z}{\mathbf{Z}}
\newcommand{\p}{\mathbf{P}}
\newcommand{\R}{\mathbf{R}}
\newcommand{\SLE}{{\rm SLE}}
\newcommand{\CLE}{{\rm CLE}}
\newcommand{\var}{\mathrm{var}}
\newcommand{\one}{{\bf 1}}
\newcommand{\wt}{\widetilde}
\newcommand{\wh}{\widehat}
\newcommand{\ol}{\overline}
\newcommand{\NW}{\mathsf{NW}}
\newcommand{\NE}{\mathsf{NE}}
\newcommand{\SE}{\mathsf{SE}}
\newcommand{\SW}{\mathsf{SW}}
\newcommand{\RPM}{\mathsf{RPM}}
\newcommand{\RPMs}{\mathsf{RPMs}}
\newcommand{\BPRPM}{\mathsf{BPRPM}}
\newcommand{\BPRPMs}{\mathsf{BPRPMs}}
\newcommand{\move}{{\mathfrak m}}
\newcommand{\weight}{{\mathfrak a}}
\newcommand{\id}{\mathrm{id}}
\begin{document}

\title[Bipolar oriented random planar maps with large faces and exotic $\SLE_\kappa(\rho)$]{Bipolar oriented random planar maps with large faces and exotic $\SLE_\kappa(\rho)$ processes}

\author{Konstantinos Kavvadias and Jason Miller}

\begin{abstract}
We consider bipolar oriented random planar maps with heavy-tailed face degrees.  We show for each $\alpha \in (1,2)$ that if the face degree is in the domain of attraction of an $\alpha$-stable L\'evy process, the corresponding random planar map has an infinite volume limit in the Benjamini-Schramm topology.  We also show in the limit that the properly rescaled contour functions associated with the northwest and southeast trees converge in law to a certain correlated pair of $\alpha$-stable L\'evy processes.  Combined with other work, this allows us to identify the scaling limit of the planar map with an $\SLE_\kappa(\rho)$ process with $\rho = \kappa-4 < -2$ on $\sqrt{\kappa}$-Liouville quantum gravity for $\kappa \in (4/3,2)$ where $\alpha, \kappa$ are related by $\alpha = 4/\kappa-1$.
\end{abstract}

\date{\today}
\maketitle

\setcounter{tocdepth}{1}
\tableofcontents

\parindent 0 pt
\setlength{\parskip}{0.20cm plus1mm minus1mm}

\section{Introduction}
\label{sec:introduction}

\subsection{Overview and setting}

A \emph{planar map} is a graph $G = (V,E)$ together with an embedding into the plane so that no two edges cross, considered up to orientation preserving homeomorphism.  A \emph{random planar map} ($\RPM$) is a planar map chosen according to some probability measure.  Well studied examples of $\RPMs$ include planar triangulations or quadrangulations chosen uniformly at random or with the extra structure of a statistical mechanics model, such as the uniform spanning tree (see \cite{lg2019survey,miermontstflour,dms2014mating,s2016qginv} and the references therein).  In this work, we will study $\RPMs$ with the extra structure of a \emph{bipolar orientation} with a specified source and sink (the so-called ``poles'').  Recall that a bipolar orientation of a graph $G$ is an acyclic orientation of its edges without a source or sink except at the specified poles.  The source (resp.\ sink) is a vertex with no incoming (resp.\ outgoing) edges.

Bipolar oriented planar maps were previously studied in \cite{kmsw2019bipolar}, in which it was shown that they are in bijection with a walk on $\Z^2$ whose coordinates are the contour functions for a pair of discrete trees.  These discrete trees are concretely defined as follows.  Suppose that $G$ is a planar map with  no self-loops but with multiple edges allowed which is equipped with a bipolar orientation.  Then we can view the directed edges as a pointing ``north'' while their opposites point ``south''.  As explained in \cite[Section~1.2]{kmsw2019bipolar}, the edges pointing to an interior vertex given in cyclic order consists of a single group of north-going edges and a single group of south-going edges.  This implies that each vertex has a unique north-going edge which is west-most, which is called its northwest ($\NW$) edge, and this allows one to define the $\NW$ tree in which one takes the parent of an edge $e$ to be the $\NW$ edge from the upper vertex of $e$.  The southeast ($\SE$) tree is defined analogously except with the $\SE$ edge in place of the $\NW$ edge.  The root of the $\NW$ (resp.\ $\SE$) tree is the source (resp.\ sink).

By using the contour function encoding, the $\NW$ and $\SE$ trees correspond to a walk on $\Z^2$.  It was shown in \cite{kmsw2019bipolar} that if one picks a bipolar oriented planar map uniformly at random ($\BPRPM$) with a fixed number of edges, all of which are triangles then the contour functions for the $\NW$ and $\SE$ trees converge in the limit to a correlated two-dimensional Brownian motion.  Using the mating of trees machinery developed in \cite{dms2014mating}, this result has the interpretation of being a scaling limit result for such maps towards the Schramm-Loewner evolution ($\SLE$) \cite{schramm2000scaling} with parameter $\kappa=12$ on a $\sqrt{4/3}$-Liouville quantum gravity (LQG) surface.  The results of \cite{kmsw2019bipolar} hold more generally if the bipolar oriented map consists of faces which all have the same size or are allowed to have varying sizes, as long as the face degree distribution has a sufficiently strong tail.  (See also the works \cite{ghs2016bipolar,bm2020baxter}, which prove the joint convergence of the $\NW$, $\NE$, $\SW$, $\SE$ trees where the $\SW$ and $\NE$ trees are defined using the so-called dual orientation.)

In this work, we will consider $\BPRPMs$ with \emph{large faces}, meaning that the law of the face degree is in the domain of attraction of an $\alpha$-stable random variable with $\alpha \in (1,2)$.  In this regime, we will construct the infinite volume limit and then show that the properly rescaled encoding contour functions converge in the limit to a certain pair of correlated $\alpha$-stable L\'evy processes.  When combined with another work, this will allow us to interpret certain exotic $\SLE_\kappa(\rho)$ processes \cite{ms2019lightcone} (with $\rho=\kappa-4 < -2$) on $\sqrt{\kappa}$-LQG as the scaling limit of $\BPRPMs$ with large faces where the relationship between $\alpha,\kappa$ is given by $\alpha = 4/\kappa-1$ and $\kappa \in (4/3,2)$.  We recall that the standard $\SLE_\kappa(\rho)$ processes (i.e., $\rho > -2$) and $\kappa \in (0,4]$ are simple curves and have dimension $1+\kappa/8$ \cite{bef2008dimension,rs2005basic} but the $\SLE_\kappa(\rho)$ processes with $\kappa \in (0,4]$ and $\rho < -2$ are self-intersecting \cite{ms2019lightcone,msw2017clepercolations} and have dimension strictly larger than $1+\kappa/8$ \cite{m2018dimension}.  In the particular case $\kappa \in (4/3,2)$ so that $\rho = \kappa-4 \in (-\kappa/2-2,-2)$, the dimension is given by $1+2/\kappa - 3\kappa/8 > 1+ \kappa/8$ \cite{m2018dimension}.

We remark that random planar maps with large faces have been studied in several other contexts.  For example, the work \cite{lgm2011largefaces} is focused on $\RPMs$ without the structure of a statistical mechanics model but with varying face degrees which are in the domain of attraction of an $\alpha$-stable random variable.  In the regime considered in \cite{lgm2011largefaces}, it is expected that the scaling limit can be described by an instance of the conformal loop ensemble ($\CLE$) \cite{s2009cle,sw2012cle}, the loop version of $\SLE$, on LQG, as studied in \cite{msw2020simplecle,msw2020nonsimplecle}.  $\CLE_\kappa$ corresponds to $\SLE_\kappa(\kappa-6)$, so the behavior is different from what we will encounter here.  We also mention \cite{ccm2020cascade,cr2020duality,budd2018peeling,bbck2018martingales,bbg2012recursive} as a sampling of other works which study $\RPMs$ which are expected to be in this regime.  Finally, let us also mention \cite{bco2019shredded}, which studies $\RPMs$ with large faces in the case of so-called causal maps and shows that in the scaling limit they belong to a universality class which is not related to $\SLE$ and LQG.

\subsection{Main results}

We now turn to state our main results.  Let us begin by describing the setting in a slightly more precise manner.
\begin{figure}[h!]
\centering
\includegraphics[scale=0.8]{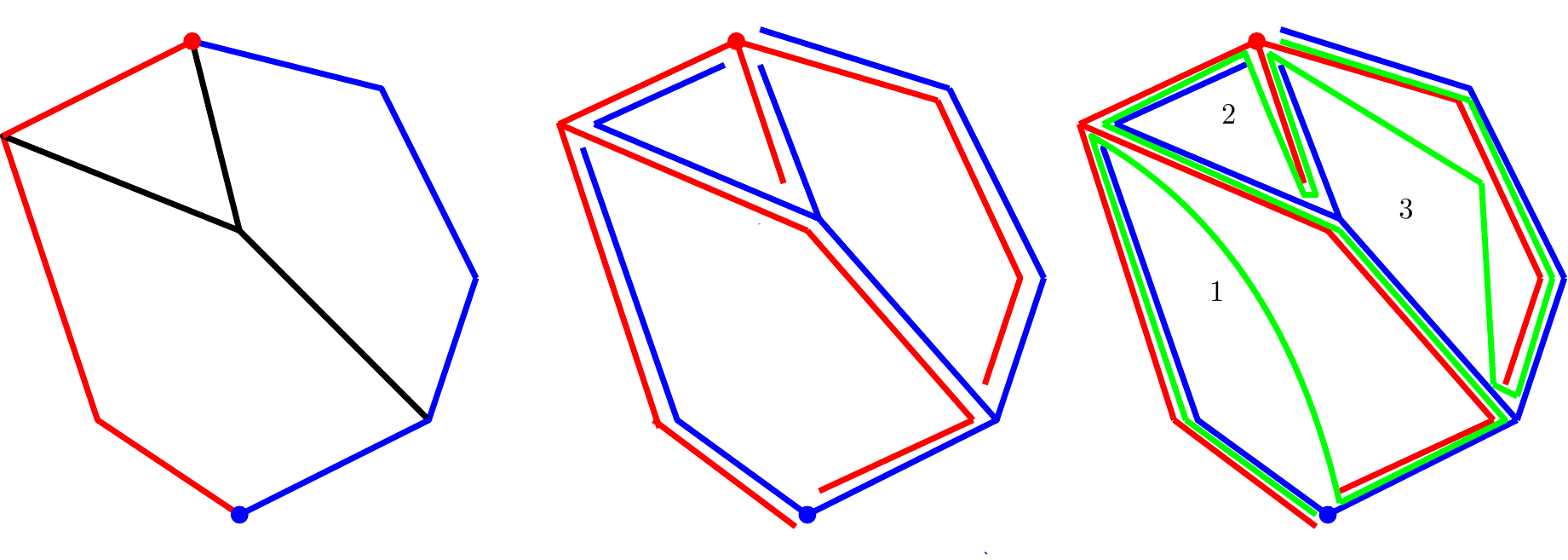}
\caption{\textbf{Left:} A bipolar oriented planar map,  embedded so that each edge is oriented in the direction along which the vertical coordinate increases.  The red (resp.\ blue) edges form the western (resp.\ eastern) boundary of the map.  Also the \emph{source} (resp.\ \emph{sink}) of the map is colored blue (resp.\ red). \textbf{Middle:} The set of oriented edges forms the \emph{northwest tree}, where the parent of each edge $e$ is the leftmost upward oriented edge emanating from the terminal vertex of $e$.  If we reverse the orientations of all edges, we can construct an analogous tree (blue) and embed both trees so that they do not cross.  \textbf{Right:} We trace the Peano curve between the two trees using a green path.  Each edge of the green path either moves along an edge of the map or across a face of the map.  The faces are numbered by the order they are traversed by the green path but we emphasize that the traversals of the edges of the green path are what correspond to steps of the lattice path.}
\label{fig:encoding_trees}
\end{figure}

Let $G$ be a bipolar oriented planar map with a finite number of edges.  We assume that the source and sink are incident to the same face and the planar map is embedded into the plane so that this is the unbounded face.  The \emph{western} (resp.\ \emph{eastern}) boundary consists of the edges incident to the unbounded face in the clockwise (resp.\ counterclockwise) boundary arc starting from the source and ending at the sink.  We consider the Peano curve associated with $G$ which winds between the $\NW$ and $\SE$ tree starting from the source (south pole) and ending at the sink (north pole). The Peano curve traverses in each step either an edge or an interior face.  Let~$E$ be the set of edges of~$G$, which we order $e_{0},\ldots,e_{|E|-1}$ according to when they are visited by the Peano curve. Then, if the western boundary has $\ell+1$ edges and the eastern boundary has $k+1$ edges, we can define a lattice path $(X_m , Y_m)_{0 \leq m \leq |E| - 1}$ starting at $(0,\ell)$ and ending at $(k,0)$ such that $X_m$ (resp.\ $Y_m$) is the distance in the $\SE$ (resp.\ $\NW$) tree from the source (resp.\ sink) to the lower (resp.\ upper) endpoint of $e_k$. It was shown in~\cite{kmsw2019bipolar} that there is a bijection between bipolar oriented planar maps with $n$ edges and $\ell+1$ (resp.\ $k+1$) ends in the western (resp.\ eastern) outer boundary and length-$(n-1)$ paths from $(0,\ell)$ to $(k,0)$ in the non-negative quadrant $\N_0^2$ having increments $(1,-1)$ and $(-i,j)$ with $i,j \geq 0$; we will give a review of this bijection in Section~\ref{sec:Bipolar}. We note that steps of the form $(1,-1)$ correspond to the case where the Peano curve traverses an edge while steps of the form $(-i,j)$ correspond to the case where the Peano curve traverses an interior face with $i+1$ edges on its west and $j+1$ edges on its east. In the present paper we will construct an infinite volume analog of the aforementioned bijection. In particular, we will construct a rooted $\RPM$ $(G,\rho)$ with infinite number of edges encoded by a lattice path $(X_m , Y_m)_{m \in \Z}$ and such that each increment $(X_{m+1}-X_m , Y_{m+1} - Y_m)$ will correspond to a step $w_m$ in the construction of $G$.  We note that in the infinite volume setting the lattice path does not stay in the non-negative quadrant.

Next we describe the probability measures on bipolar oriented planar maps that we are going to consider.  For fixed non-negative weights $(\weight_k)_{k \geq 2}$ such that at least one of them is positive,  we weight a bipolar oriented planar map by $\prod_{k=2}^{\infty}\weight_{k}^{n_k}$ where $n_k$ is the number of faces with $k$ edges. We also use the convention $0^{0} = 1$. Then we can pick a bipolar oriented planar map with $n$ edges with probability proportional to its weight.  We note that this defines a probability measure if at least one bipolar map has positive weight.  In Section~\ref{sec:Bipolar}, we will show that for every $\alpha \in (1,2)$, if we take the weights $(\weight_k)_{k \geq 2}$ to be such that $\weight_k = C_0 L^{-k} k^{-\alpha-2}$ for $C_0, L > 0$ then the increments of the random walk $(X,Y)$ encoding the bipolar planar map take the value $(-i,j)$ with probability $C_1 (i+j+2)^{-\alpha - 2}$ for $i,j \geq 0$, where $C_1 > 0$ is a constant depending only on $\alpha$.  We note that the choice of $L$ in fact does not matter because its total contribution to the weight of a given map with~$n$ edges is $L^{-n}$, hence gets absorbed into the normalization constant to get a probability measure.  In that case, the step distribution of $(X,Y)$ belongs to the domain of attraction of a two-dimensional $\alpha$-stable L\'evy process. For the rest of the paper, we will consider the aforementioned choice of weights.

We now state the notion of convergence of graphs that we are going to use. Recall that a rooted graph is a pair $(G,\rho)$ where $G$ is a connected graph and $\rho$ is a vertex in $G$.  Moreover, a rooted graph $(G,\rho)$ is isomorphic to $(G' , \rho')$ if there is a graph isomorphism from $G$ to $G'$ which takes~$\rho$ to~$\rho'$.  We will use the notation $B_G(\rho,r)$ to denote the graph metric ball centered at $\rho$ of radius $r$.

\begin{definition}
\label{def:graph_convergence}
Let $(G,\rho)$ and $(G_n,\rho_n)_{n \in \N}$ be random connected rooted graphs. We say that $(G,\rho)$ is the distributional limit of $(G_{n},\rho_{n})$ as $n \to \infty$ (or that $(G_{n},\rho_{n})$ converges to $(G,\rho)$ as $n \to \infty$ with respect to the Benjamini-Schramm topology) if for every $r>0$ and for every finite rooted graph $(H,\rho')$, the probability that $(H,\rho')$ is isomorphic to $(B_{G_{n}}(\rho_{n},r),\rho_{n})$ converges to the probability that $(H,\rho')$ is isomorphic to $(B_{G}(\rho,r),\rho)$.
\end{definition}

Our first main result is the existence of the Benjamini-Schramm (i.e., ``local'') limit for a $\BPRPM$ with large faces. More precisely, we fix $A,B > 0$ and for $n \in \N$ we let $G_n$ be a $\BPRPM$ with $n+1$ edges and conditioned such that it has $\lfloor A n^{1/ \alpha} \rfloor + 1$ (resp.\ $\lfloor B n^{1 / \alpha} \rfloor + 1$) edges on its western (resp.\ eastern) boundary. Note that we can root each $G_n$ by picking $\rho_n$ independently and uniformly among the vertices of $G_n$. Let also $(G,\rho)$ be the infinite volume $\RPM$ encoded by the lattice path $(X_m , Y_m)_{m \in \Z}$. Then the following holds.

\begin{theorem}
\label{thm:benjamini_schramm_conv}
The infinite volume rooted $\RPM$ $(G,\rho)$ is the distributional limit of the rooted $\BPRPMs$ $(G_n,\rho_n)$ as $n \to \infty$.
\end{theorem}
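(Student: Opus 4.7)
The plan is to transfer the claim to the lattice-path encoding and reduce it to a local limit theorem for a two-dimensional heavy-tailed random walk, simultaneously conditioned on its endpoint and to remain in the non-negative quadrant.

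\emph{Locality and rerooting.} First I would show that for every $r>0$ the ball $B_G(\rho,r)$ is determined by the restriction of the encoding two-sided walk $(X_m,Y_m)_{m\in\Z}$ to a random but a.s.\ finite symmetric window $[-T_r,T_r]$, via a natural discovery algorithm that reveals incident edges and faces starting from the index of the root. Although face degrees are heavy-tailed, only finitely many faces are incident to the finitely many vertices within distance $r$, so local finiteness of the infinite-volume map together with an iterative exploration gives $T_r<\infty$ a.s. The same discovery rule applied to the finite walk encoding $G_n$ expresses $B_{G_n}(\rho_n,r)$ as a function of the walk in a window around the index corresponding to $\rho_n$. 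Since $\rho_n$ is a uniformly chosen vertex while the walk is indexed by edges of the Peano curve, I would also perform a standard size-biasing argument, analogous to those used in~\cite{dms2014mating,kmsw2019bipolar}, to reduce ``root at a uniformly chosen vertex'' to ``root at a uniformly chosen time index $I_n \in \{0,\ldots,n\}$'', which is the natural root for the walk-level statement.

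\emph{Local limit theorem.} The analytic heart of the argument is the following: writing $(X^n,Y^n)$ for the length-$n$ walk conditioned to go from $(0,\lfloor An^{1/\alpha}\rfloor)$ to $(\lfloor Bn^{1/\alpha}\rfloor,0)$ while staying in $\N_0^2$, and letting $I_n$ be uniform on $\{0,\ldots,n\}$ and independent of $(X^n,Y^n)$, the time-shifted increments
\[
\bigl(X^n_{I_n+m}-X^n_{I_n},\ Y^n_{I_n+m}-Y^n_{I_n}\bigr)_{|m|\le K}
\]
converge in law, for every fixed $K\in\N$, to the restriction to $[-K,K]$ of the unconditional two-sided random walk with the prescribed step distribution. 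To establish this I would split at the random time $I_n$ via the Markov property into three independent pieces (past, window of width $2K+1$, future), combine a two-dimensional local limit theorem at the prescribed endpoint (valid in the $\alpha$-stable domain of attraction of the step law), and invoke Denisov--Wachtel/Doney-type estimates for the probability that a random walk bridge stays in a cone. The Radon--Nikodym derivative of the conditional window law with respect to the unconditional one is then shown to tend to $1$.

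\emph{Conclusion and main obstacle.} Combining the previous steps: for any finite rooted graph $(H,\rho')$ and any $r>0$, choose $K$ large so that $\p(T_r > K) < \varepsilon$ uniformly in $n$, observe that $\{B_{G_n}(\rho_n,r)\cong(H,\rho')\} \cap \{T_r \leq K\}$ is determined by the window of width $2K+1$ of the encoding walk around $I_n$, and apply the local limit result in that window to obtain $\p(B_{G_n}(\rho_n,r)\cong(H,\rho')) \to \p(B_G(\rho,r)\cong(H,\rho'))$, which is precisely Definition~\ref{def:graph_convergence}. The expected main obstacle is the local limit theorem under the joint endpoint and quadrant conditioning: controlling uniformly both the endpoint density of a two-dimensional non-symmetric stable walk (with steps of the form $(-i,j)$ lacking any coordinate symmetry) and the asymptotics of the quadrant-persistence probability with the anisotropic boundary lengths scaling as $An^{1/\alpha}$ and $Bn^{1/\alpha}$ is delicate, and is where the bulk of the technical work will reside. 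The uniform control in $K$ required to let $K\to\infty$ after $n\to\infty$ is the subtle point that makes this more than a standard bridge-to-walk convergence.
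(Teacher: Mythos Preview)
Your overall architecture---reduce to a statement about a finite window of the encoding walk around a uniform index, then show that the conditioned-window law converges to the unconditioned one---matches the paper. The locality step (your $T_r<\infty$) corresponds to the paper's Proposition~\ref{prop:local_finite}, and the final assembly is essentially the same as the proof of Theorem~\ref{thm:benjamini_schramm_conv} given in Section~\ref{sec:B-S_convergence}.

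The analytic core, however, is handled very differently. You propose a Radon--Nikodym argument requiring sharp asymptotics for the endpoint density and for quadrant persistence (Denisov--Wachtel/Doney-type input). The paper bypasses this entirely: it proves only a crude stretched-exponential lower bound $\p[F_n]\ge q^{n^{1/\alpha}}$ on the conditioning event, obtained by an explicit hands-on construction of admissible paths (Propositions~\ref{proposition:lower_bound} and~\ref{proposition:lower_bound_conditioning}). This is then combined with Cram\'er's theorem applied to i.i.d.\ block indicators, which gives a genuinely exponential rate $e^{-cn}$ for the event that a fixed window has the wrong statistics. Since $n^{1/\alpha}=o(n)$, the conditioning is asymptotically irrelevant and the window law converges in total variation (Theorem~\ref{thm:local_convergence}). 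This is both simpler and more robust: no local limit theorem, no cone asymptotics, no control of Radon--Nikodym derivatives.

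Your proposed route is not wrong in principle, but the obstacle you flag is real and may be more serious than you anticipate. The standard Denisov--Wachtel machinery is built for increments with finite second moments; extensions to the $\alpha$-stable domain of attraction in a two-dimensional cone, with the highly asymmetric step set $\{(1,-1)\}\cup\{(-i,j):i,j\ge0\}$, are not off-the-shelf. The paper's approach shows that none of this is needed. One further remark: you correctly note that rooting at a uniform vertex is not literally the same as rooting at a uniform time index; the paper in fact silently redefines $\rho_n$ at the start of Section~\ref{sec:B-S_convergence} to be the active vertex at a uniform step, so your size-biasing concern is legitimate but is not addressed there either.
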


Our second main result is the convergence of a scaled version of the lattice path $(X,Y)$ used to construct the infinite volume $\RPM$ $G$ towards a coupled pair of $\alpha$-stable L\'evy processes.  Before we state our result, we review the definition of the Skorokhod topology.  For a compact interval $[a,b] \subseteq \R$,  we let $D([a,b])$ be the space of c\`adl\`ag functions $[a,b] \rightarrow \R$.  Let also $\Lambda_{a,b}$ be the space of continuous and increasing functions mapping $[a,b]$ onto $[a,b]$, and for $\lambda \in \Lambda_{a,b}$  we set 
\begin{align*}
\lVert \lambda \rVert^{0} = \sup_{a \leq s < t \leq b} \left | \log\left ( \frac{\lambda(t) - \lambda(s)}{t-s}\right ) \right|.
\end{align*}
Note that $D([a,b])$ becomes a separable and complete metric space under the metric
\begin{align*}
d_{a,b}(f,g) = \inf_{\lambda \in \Lambda_{a,b}} \left \{\max\{ \lVert \lambda \rVert^{0},  \lVert f-g \circ \lambda \rVert \} \right \}
\end{align*}
for $f,g \in D([a,b])$,  where $\lVert \cdot \rVert$ denotes the supremum norm on functions $[a,b] \to \R$.  Moreover,  the space $D(\R)$ endowed with the metric $d$ is  defined by 
\begin{align*}
d(f,g) = \sum_{n \in \N} 2^{-n} \min \{1, d_{-n,n}(f,g)\}.
\end{align*}
In this work, we will also consider random variables taking values in $D^2$ which we endow with the metric defined as above but with $\|\cdot\|$ in the term $\|f - g \circ \lambda\|$ in the definition of $d_{a,b}$ taken to be given by the supremum norm on functions $[a,b] \to \R^2$.

We will abuse notation and write
\begin{align*}
X_{nt}= X_{\lfloor nt \rfloor} \quad\text{and}\quad  Y_{nt} = Y_{\lfloor nt \rfloor} \quad\text{for}\quad  n \in \N,\  t \in \R.
\end{align*}
Note that the process $(X_{nt},Y_{nt})$ for $n \in \N$ and  $t \in \R$ is well-defined since $(X,Y)$ is indexed by $\Z$.

We recall from \cite{bertoin1996levy} also that that if $(t_k,j_k)$ are the jump time/magnitude pairs for an $\alpha$-stable L\'evy process $Z$ with $\alpha \in (1,2)$ then 
\begin{equation}
\label{eqn:levy_from_jumps}
Z_t = \lim_{\delta \to 0} \left(\sum_{t_k \leq t} j_k \one_{|j_k| \geq \delta} - \E\left[ \sum_{t_k \leq t} j_k \one_{|j_k| \geq \delta} \right] \right).	
\end{equation}

We are now ready to state our next result.

\begin{theorem}
\label{thm:boundary_length_conv}
The process $(n^{-1/\alpha}(X_{nt},Y_{nt}))_{t \in \R}$ converges in distribution with respect to the Skorokhod topology on $D^2 = D(\R) \times D(\R)$ as $n \to \infty$ to a correlated pair $W = (W^1 , W^2)$ of $\alpha$-stable L\'evy processes indexed by $\R$ whose law can be sampled from as follows.  Let $Z$ be a bi-infinite $\alpha$-stable L\'evy process with only upward jumps normalized so that $Z_0 = 0$ and whose L\'evy measure $\Pi$ is given by $\Pi(dx) = C_1 x^{-\alpha-1}\one_{x > 0}dx$ where $C_1 = C_1(\alpha) > 0$ is a constant depending only on $\alpha$.  Let $(t_k,j_k)_{k \in \N}$ be the jump time/magnitude pairs for $Z$ and let $(U_k)_{k \in \N}$ be an independent sequence of i.i.d.\ random variables which are uniform in $[0,1]$.  Then $W^1$ (resp.\ $W^2$) is the $\alpha$-stable L\'evy process associated with the jump time/magnitude pairs $(t_k,-U_k j_k)_{k \in \N}$ (resp.\ $(t_k,(1-U_k) j_k)_{k \in \N}$) as in~\eqref{eqn:levy_from_jumps}.
\end{theorem}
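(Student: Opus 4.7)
The plan is to deduce Theorem~\ref{thm:boundary_length_conv} from a classical bivariate functional stable limit theorem applied to the iid random walk whose increments $\xi_m := (X_{m+1}-X_m, Y_{m+1}-Y_m)$, $m \in \Z$, are described in Section~\ref{sec:Bipolar}. In that section it is shown that the $\xi_m$ are iid with $\p(\xi_0 = (1,-1)) = p_0$ (the edge-step probability) and $\p(\xi_0 = (-i,j)) = C_1 (i+j+2)^{-\alpha-2}$ for $i,j \geq 0$ (the face steps), and that the tuning of the parameters $C_0, L$ in the weights $\weight_k = C_0 L^{-k} k^{-\alpha-2}$ makes $\E[\xi_0] = 0$.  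Once this iid mean-zero structure is in hand, only two things remain: to identify the bivariate L\'evy measure of the candidate limit, and to invoke an off-the-shelf invariance principle.

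The first step is the L\'evy measure computation.  The edge steps have bounded magnitude, so after rescaling by $n^{1/\alpha}$ they contribute no mass in the limit.  For the face steps, reparametrize $(i,j)$ by the magnitude $R = i+j \geq 0$ and the relative split $U = i/R$ (on $\{R > 0\}$).  Summing over the $R+1$ antidiagonal lattice points yields
\begin{equation*}
\p(R = m) = (m+1) C_1 (m+2)^{-\alpha-2} \sim C_1 m^{-\alpha-1} \quad \text{as } m \to \infty,
\end{equation*}
which places $R$ in the normal domain of attraction of a one-sided $\alpha$-stable law with L\'evy measure $C_1 r^{-\alpha-1} \one_{r>0}\, dr$.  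Crucially, $\p(\xi_0 = (-i,j))$ depends on $(i,j)$ only through $i+j$, so the conditional law of $U$ given $R = m$ is exactly uniform on $\{0, 1/m, \ldots, 1\}$ and converges weakly to $\text{Uniform}[0,1]$.  Changing variables through $(r,u) \mapsto (-ur, (1-u)r)$, I would conclude
\begin{equation*}
n \cdot \p\bigl( n^{-1/\alpha} \xi_0 \in dx\, dy \bigr) \longrightarrow C_1 (y-x)^{-\alpha-2}\, \one_{\{x \leq 0,\, y \geq 0\}}\, dx\, dy \quad \text{as } n \to \infty
\end{equation*}
vaguely on $\R^2 \setminus \{0\}$.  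This limiting measure is precisely the pushforward of the product $C_1 r^{-\alpha-1}\,dr \otimes du$ under $(r,u) \mapsto (-ur,(1-u)r)$, and therefore coincides with the L\'evy measure of the pair $(W^1, W^2)$ obtained from the atoms $(t_k, j_k)$ of $Z$ and independent uniforms $(U_k)$ via~\eqref{eqn:levy_from_jumps}.

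Next I would invoke the classical functional stable limit theorem for triangular arrays of iid $\R^2$-valued increments in the normal domain of attraction of a stable distribution (for instance Jacod--Shiryaev, Theorem VII.3.4, or the vector-valued version of Skorokhod's invariance principle).  Combined with the L\'evy measure computation and the mean-zero property, this gives convergence of $(n^{-1/\alpha}(X_{nt}, Y_{nt}))_{t \in [0,T]}$ in the Skorokhod topology on $D([0,T])^2$ to $(W^1_t, W^2_t)_{t \in [0,T]}$ for each $T > 0$.  To cover $t \in [-T, 0]$ I would apply the same theorem to the time-reversed walk (whose increments are iid with the reflected step distribution and whose scaling limit is the time-reversal of $(W^1, W^2)$).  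Since the metric on $D(\R)$ is a weighted sum of the metrics $d_{-n,n}$, convergence on each compact $[-n,n]$ upgrades automatically to convergence in $D(\R) \times D(\R)$.

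The genuinely delicate point is identifying the limit as a measure on $\R^2 \setminus \{0\}$ with the exact product structure $C_1 r^{-\alpha-1} dr \otimes du$: one needs both the power-law tail of $R$ and the independence (in the limit) of $R$ and $U$ to match the coupling of $(W^1, W^2)$ to the single driving process $Z$.  This is where an abstract two-dimensional stable limit might be harder, but here the uniform conditional law of $U$ is exact already at the discrete level, so no delicate estimates enter.  The one consistency I would keep an eye on is the absence of centering in the statement: because $\alpha > 1$, any nonzero mean increment would produce a deterministic linear drift of order $n$, dwarfing $n^{1/\alpha}$, so matching the centered limit~\eqref{eqn:levy_from_jumps} relies essentially on the mean-zero fact produced by the construction in Section~\ref{sec:Bipolar}.
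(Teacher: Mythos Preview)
Your approach is correct and follows the same overall strategy as the paper: establish vague convergence of $n\,\p(n^{-1/\alpha}\xi_0\in\cdot)$ to the bivariate L\'evy measure, deduce the functional limit, and identify the limit with the $(Z,U_k)$ construction by recognizing the L\'evy measure as the pushforward of $\Pi\otimes du$ under $(r,u)\mapsto(-ur,(1-u)r)$. Where the paper carries out the small-jump/large-jump truncation by hand (its Steps~2--3, essentially reproducing the Resnick--Greenwood argument it cites) and computes the drift correction $p(\alpha)=-\E[\wt W_1]$ explicitly to match the centered form~\eqref{eqn:levy_from_jumps}, you delegate both to an off-the-shelf invariance principle together with the mean-zero property of $\xi_0$; this is legitimate but you should be aware that the conditions of Jacod--Shiryaev~VII.3.4 still require checking the truncated second-moment (no Gaussian part) and truncated-mean (drift) conditions, which is routine here but not literally subsumed by the vague convergence alone. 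Your polar parametrization by $(R,U)$, with $U$ \emph{exactly} discrete-uniform given $R$, is cleaner than the paper's rectangle-by-rectangle verification of vague convergence and has the pleasant feature that the identification of the limit with the $(Z,U_k)$ coupling is immediate rather than a separate Poisson-pushforward computation at the end.
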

In the statement of Theorem~\ref{thm:boundary_length_conv}, the jumps associated with the process $Z$ correspond to the scaling limit of the overall face sizes in the $\BPRPM$ and the jumps associated with $W^1$ (resp.\ $W^2$) correspond to the scaling limit of the part of the face boundary length which is on the western (resp.\ eastern) boundary.

We will show in a companion work that one can view an $\SLE_\kappa(\kappa-4)$ process drawn on top of a certain type of independent LQG surface as the Peano curve which snakes between a pair of $\alpha$-stable looptrees which are constructed from the coordinate functions of a two-dimensional $\alpha$-stable L\'evy process which has the same law as in Theorem~\ref{thm:boundary_length_conv}.  Here, the relationship between $\alpha \in (1,2)$ and $\kappa \in (4/3,2)$ is given by
\[ \alpha = \frac{4}{\kappa}-1.\]
Note that for $\kappa=4/3$ we have that $\alpha=2$.  The reason for this is that the $\SLE_\kappa(\rho)$ processes are defined for $\rho > -2-\kappa/2$ and, as $\rho \to -2-\kappa/2$, an $\SLE_\kappa(\rho)$ process converges with $\kappa \in (0,4)$ to an $\SLE_{16/\kappa}$ process.  In particular, $\kappa=4/3$ corresponds to $\SLE_{12}$, which is natural to expect because a stable L\'evy process with $\alpha=2$ corresponds to Brownian motion.  At the other extreme, for $\kappa=2$ we get $\alpha=1$.  The reason for this is that at $\kappa=2$ we have that $\rho=\kappa-4 = -2$ and $-2$ is the threshold which separates the exotic and standard $\SLE_\kappa(\rho)$ processes (which by the theory developed in \cite{dms2014mating} are related to stable L\'evy processes with $\alpha \in (0,1)$).

We emphasize that the jumps of the two coordinates of the limiting process $W$ a.s.\ occur simultaneously.  This is in contrast to the stable L\'evy processes which arise when one considers the scaling limits of other types of random planar map models, i.e., a uniform quadrangulation decorated by a percolation configuration \cite{ck2015looptree,gm2017percolation}, where the coordinates are independent. 

Let us now explain where the value $\rho=\kappa-4$ arises.  It turns out that the complementary components of an $\SLE_\kappa(\kappa-4)$ process drawn on top of an appropriate independent $\sqrt{\kappa}$-LQG surface parameterize conditionally independent quantum disks.  In this case, the quantum disk is marked by two points corresponding to the first and last points on its boundary which are visited by the $\SLE_\kappa$-like excursion of the $\SLE_\kappa(\kappa-4)$ which makes up its left boundary (in the case that the force point is on the right side).  It turns out that the conditional law of these marked points given the surface are that of independent picks from the quantum disk boundary measure \cite{dms2014mating}.  As we will see later on in this work, each face in a $\BPRPM$ is naturally marked by two vertices (its ``north'' and ``south'') which are also in a certain sense uniformly random points on the face boundary given the boundary length of the face.

The type of scaling limit result considered in Theorem~\ref{thm:boundary_length_conv} is of what is now known as the ``peanosphere'' type, which means one takes a scaling limit of the contour functions which encode a pair of trees which can be glued together in a certain way to construct the planar map.  Other examples of this type include the previous work on $\BPRPMs$ \cite{kmsw2019bipolar} as well as \cite{s2016qginv,lsw2017schnyder,gkmw2018active}.  This type of scaling limit is different from that which encodes the overall metric structure of a RPM.  Limits of this type were first established by Le Gall \cite{lg2013bm} and Miermont \cite{m2013bm} and in this case the scaling limit is the so-called Brownian map.  Later works include the joint scaling limit results for self-avoiding walks \cite{gm2021saw} and percolation \cite{gm2017percolation} on random planar maps viewed as path-decorated metric spaces.

\subsection{Outline}

The remainder of this article is structured as follows.  We collect some preliminaries in Section~\ref{sec:Bipolar}. We prove the convergence of the boundary length processes in Section~\ref{sec:scaling_limit}. We then construct the infinite volume limit and prove the Benjamini-Schramm convergence in Sections~\ref{sec:total_variation} and~\ref{sec:B-S_convergence}.

\subsection*{Acknowledgements} K.K.'s work was supported by the EPSRC grant EP/L016516/1 for the University of Cambridge CDT (CCA).  J.M.'s work was supported by ERC starting grant 804166 (SPRS).  We thank Rick Kenyon, Scott Sheffield, and David Wilson for helpful discussions related to this work.

\section{Bipolar-oriented maps and lattice paths}
\label{sec:Bipolar}

\subsection{From lattice paths to bipolar maps}

We saw in Section~\ref{sec:introduction} that every bipolar map corresponds to a lattice path which remains in the non-negative quadrant by keeping track of the distances to the roots of the $\NW$ and $\SE$ trees as one visits the edges in the map using the ordering from the Peano curve. It is shown in~\cite{kmsw2019bipolar} that this construction can be reversed, constructing a bipolar oriented planar map from a lattice path of the above type.

The bipolar oriented planar map is constructed from the lattice walk by successively sewing edges and polygons depending on the sequence of increments of the lattice path. Following the terminology of~\cite{kmsw2019bipolar}, we denote by $\move_{i,j}$ a step where the walk increment is $(-i,j)$ with $i,j \geq 0$ and by $\move_{e}$ a step where the walk increment is $(1,-1)$. We note that in order to construct the infinite volume analog of the bipolar oriented planar map, it is convenient to extend the bijection so that it can be applied to lattice paths which do not necessarily remain in the non-negative quadrant. If we follow these steps, then we obtain a marked bipolar map, which is a bipolar oriented planar map but with some missing edges on its eastern and western boundary and with two fixed vertices lying on the eastern and western boundary respectively. The vertex which is on the western boundary is called the ``start vertex'' and it is not at the top of the western boundary and every vertex below it on the western boundary has at most one downward edge. On the other hand, the fixed vertex of the marked bipolar map which is on the eastern boundary is called the ``active vertex'' and it is not at the bottom and every vertex above it on the eastern boundary has at most one upward edge. The missing edges on the western (resp.\ eastern) boundary are those lying below (resp.\ above) the start (resp.\ active) vertex.

Next, to describe the construction of the marked bipolar map, we start with an edge whose lower endpoint is the start vertex and the upper endpoint is the active vertex. The $\move_{e}$ move will sew an edge to the current marked bipolar map such that the lower (resp.\ upper) endpoint of the new edge is the old (resp.\ new) active vertex. The upper endpoint of the new edge is added to the marked bipolar map if the eastern boundary does not have a vertex above the old active vertex. Otherwise, the new edge gets sewn to the southernmost missing edge on the eastern boundary of the current marked bipolar map. As for the $\move_{i,j}$ move, we sew an open face with $i+1$ edges on its west and $j+1$ edges on its east, where the old active vertex becomes the north of the face and the west of the face becomes part of the eastern boundary of the current marked bipolar map. An edge is also added to the southernmost east edge of the new face and the upper vertex of this edge becomes the new active vertex. It is worth mentioning that whenever there are fewer than $i+1$ edges below the old active vertex, then the start vertex is no longer at the bottom and the remaining edges of the face are missing from the map. The latter occurs when the encoding lattice path leaves the non-negative quadrant.

\begin{figure}[h!]
\centering
\includegraphics[scale=0.8]{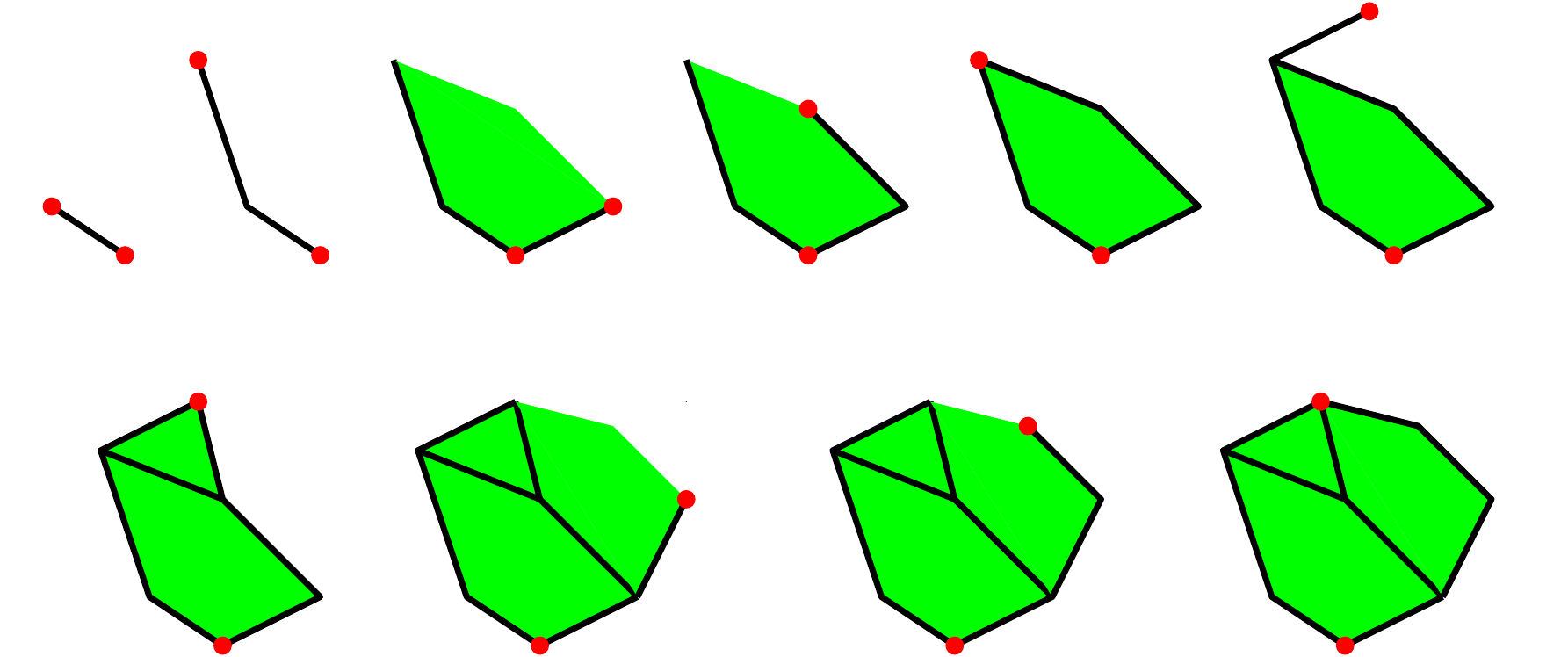}
\caption{:   The process of sewing oriented polygons and edges to obtain a bipolar oriented planar map.  The intermediate structures are marked bipolar oriented planar maps,  which may have some edges missing on the boundaries.  The sequence of steps is:
$\move_{e},\move_{1,2},\move_{e},\move_{e},\move_{e},\move_{1,0},\move_{1,2},\move_{e},\move_{e}$.}
\label{fig:sewing_faces_and_edges}
\end{figure}

The following result was proved in~\cite{kmsw2019bipolar}.

\begin{theorem} \label{thm:marked_bipolar_bijection}
The above mapping from sequences of moves from $\{\move_{e}\}\cup{\{\move_{i,j}: i,j \geq 0\}}$ to marked bipolar maps is a bijection.
\end{theorem}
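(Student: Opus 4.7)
The plan is to prove the bijection by exhibiting an explicit inverse---a \emph{deconstruction} procedure that reads off the last move from a given marked bipolar map---and then checking that construction and deconstruction are mutually inverse. A preliminary step is to verify that the forward construction is well-defined: by induction on the number of moves, after each move the resulting structure is still a marked bipolar map with the claimed invariants (active vertex on the eastern boundary with at most one upward edge above, start vertex on the western boundary with at most one downward edge below, and missing edges arranged as in the definition). Both move types preserve these invariants essentially tautologically from how they are described.

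For the inverse, given a marked bipolar map $M$ other than the initial single-edge map, I would look at the active vertex $a$. The defining conditions of a marked bipolar map single out a distinguished edge $e$ incident to $a$ from below on the eastern boundary, which will play the role of the most recently introduced edge. There are then two cases: either $e$ is not bounded on its west by an interior face whose north vertex is $a$, in which case the last move was $\move_{e}$ and deconstruction removes $e$, setting the new active vertex to be the lower endpoint of $e$; or $e$ lies on the east of an interior face $F$ with $i+1$ west edges and $j+1$ east edges whose north is $a$, in which case the last move was $\move_{i,j}$ and deconstruction removes $F$ together with $e$, declaring the new active vertex to be the one that was active just before that move.

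I would then prove, by induction on the sum of the numbers of edges and faces of $M$, three claims: (i) the deconstruction step always applies; (ii) each application strictly decreases the total size of $M$, so iteration terminates at the initial single-edge map and produces a finite sequence of moves; and (iii) each deconstruction step exactly reverses the corresponding construction step. Taken together, these claims give injectivity of the construction (iterating the deconstruction recovers the sequence of moves used to build $M$) and surjectivity (the sequence obtained by iterated deconstruction maps back to $M$ under the construction), hence the bijection.

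The main obstacle will be claim (i): unambiguously reading off the last move from the local structure at $a$. This requires a careful planar-topological analysis, using the acyclicity of the bipolar orientation together with the boundary constraints of a marked bipolar map, to ensure that the distinguished edge $e$ is well-defined and that the case distinction between $\move_{e}$ and $\move_{i,j}$ is unambiguous. Once (i) is in hand, preservation of the marked bipolar map invariants under deconstruction reduces to a routine local verification, and the inductive steps in (ii) and (iii) then go through.
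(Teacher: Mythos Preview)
The paper does not prove this theorem; it simply quotes it from \cite{kmsw2019bipolar} (``The following result was proved in~\cite{kmsw2019bipolar}'').  So there is no proof in the paper to compare against, and your outline is in fact the standard strategy used in the original source: exhibit an explicit inverse by reading off and undoing the last move, and check that the two procedures compose to the identity.

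That said, your case distinction for the deconstruction is not quite right under the conventions of this paper.  In an $\move_{i,j}$ move, the \emph{old} active vertex becomes the north of the newly sewn face, and the \emph{new} active vertex is the upper endpoint of the southernmost east edge of that face.  Hence, when you look at the current active vertex $a$ and the edge $e$ below it on the eastern boundary, the correct dichotomy is: the last move was $\move_{i,j}$ if $e$ is the southernmost east edge of a face $F$ (with $i+1$ west edges and $j+1$ east edges), in which case $a$ is the second-from-bottom vertex on the east side of $F$ and the north of $F$ is the vertex that will become active after you undo the move; otherwise the last move was $\move_e$.  Your criterion ``$F$ has north $a$'' only matches the case $j=0$.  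This is a local fix and does not affect the overall architecture of your argument, which is sound.
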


Note that the final marked bipolar map is a bipolar oriented planar map if the start vertex is at the south and the active vertex is at the north. In~\cite{kmsw2019bipolar}, the following was also shown about the sequences of moves which give valid bipolar oriented planar maps:

\begin{theorem}
\label{thm:bipolar_maps_bijection}
The above mapping gives a bijection from length-$(n-1)$ paths from $(0,\ell - 1)$ to $(k-1,0)$ in the non-negative quadrant having increments $(1,-1)$ and $(-i,j)$ with $i,j \geq 0$, to bipolar-oriented planar maps with $n$ total edges and $\ell$ (resp.\ $k$) edges on the western (resp.\ eastern) boundaries.  A step of $(-i,j)$ in the walk corresponds to a face with degree $i+j+2$ in the planar map.
\end{theorem}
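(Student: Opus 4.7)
The plan is to deduce Theorem~\ref{thm:bipolar_maps_bijection} from Theorem~\ref{thm:marked_bipolar_bijection}. The latter provides a bijection between arbitrary sequences of moves from $\{\move_e\} \cup \{\move_{i,j} : i,j\geq 0\}$ and marked bipolar maps, so what remains is to identify which sequences produce honest (i.e., not merely marked) bipolar oriented planar maps with the prescribed number of edges and boundary sizes.

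First, a marked bipolar map arising from the sewing construction is a genuine bipolar oriented planar map, with the start vertex at the source, the active vertex at the sink, and no missing boundary edges, if and only if no missing edge is ever introduced during the sewing. The paragraph preceding Theorem~\ref{thm:marked_bipolar_bijection} already notes that missing edges arise precisely when the encoding lattice path leaves the non-negative quadrant, so this reduces to the walk staying in $\N_0^2$ throughout. To make this fully rigorous, I would verify by induction on the number of applied moves that at each step the two walk coordinates equal the number of edges on the current eastern boundary strictly below the active vertex and on the current western boundary strictly above the start vertex, respectively, and that a $\move_{i,j}$ step forces a missing boundary edge precisely when it would push a coordinate below zero.

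Second, I would check that the endpoints of the walk encode the boundary sizes of the final map. The starting point $(0, \ell-1)$ reflects that the initial edge of the construction has its lower endpoint at the future source, giving $X_0 = 0$, and that once all moves have been applied this initial edge becomes the southernmost edge of the western outer boundary, giving $Y_0 = \ell-1$ when the western boundary has $\ell$ edges in total. The endpoint $(k-1, 0)$ is verified by an analogous argument applied to the last edge laid, which becomes the topmost edge of the eastern boundary. The identification of a walk step $(-i,j)$ with a face of degree $i+j+2$ is immediate from the definition of $\move_{i,j}$, which sews an open face with $i+1$ west edges and $j+1$ east edges.

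The principal technical obstacle, as I see it, is the inductive coordinate-tracking under the $\move_{i,j}$ move, which alters many boundary edges at once: the top $i+1$ edges of the current eastern boundary below the active vertex are identified with the west of a new face and become interior edges, while $j+1$ fresh edges appear on the east of the new face below the new active vertex, together with an additional edge attached at the bottom. Verifying that the net change in the two coordinate counts is exactly $(-i, +j)$ requires a careful local analysis of this attachment and of how it affects the $\NW$- and $\SE$-tree distances to the two poles.
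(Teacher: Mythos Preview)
The paper does not give its own proof of this theorem: it is stated as a result of \cite{kmsw2019bipolar} and simply quoted. So there is no ``paper's proof'' to compare against here; your proposal is an attempt to reconstruct the argument behind a cited result.

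Your strategy of deducing the statement from Theorem~\ref{thm:marked_bipolar_bijection} is the natural one and is essentially how the result is obtained in \cite{kmsw2019bipolar}. One point to sharpen: you write that the marked bipolar map is a genuine bipolar map ``if and only if no missing edge is ever introduced during the sewing,'' but the paper's own summary just before Theorem~\ref{thm:bipolar_maps_bijection} says the criterion is that the start vertex is at the south \emph{and} the active vertex is at the north. The non-negativity of the walk throughout handles the first condition (no missing edges, start vertex at the source); the second condition --- that the active vertex finishes at the sink with no eastern boundary above it --- is what forces the terminal $Y$-coordinate to be $0$, and similarly the initial $X$-coordinate being $0$ reflects that the start vertex begins at the bottom. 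You do touch on the endpoint constraints in your second paragraph, but it would be cleaner to separate the two logical requirements (quadrant constraint $\leftrightarrow$ no missing edges; endpoint values $\leftrightarrow$ start/active vertices at the poles with the correct boundary lengths) rather than folding the second into a check of boundary sizes alone. With that adjustment, the inductive coordinate-tracking you outline is exactly what is needed and your identification of the $\move_{i,j}$ bookkeeping as the only nontrivial step is correct.
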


\subsection{Probability measures on bipolar maps}

Fix $\alpha \in (1,2)$. In this section,  we give certain conditions satisfied by the weights $(\weight_k)_{k \geq 2}$ as in Section~\ref{sec:introduction}, so that the step distribution of the random walk encoding the bipolar planar map belongs to the domain of attraction of a two-dimensional $\alpha$-stable L\'evy process.  Suppose that we fix non-negative weights $(\weight_k)_{k \geq 2}$ such that at least one of them is positive and we consider the probability measures on bipolar oriented planar maps with a finite number of edges corresponding to this choice of weights as in Section~\ref{sec:introduction}.  In other words,  we consider the probability measure on bipolar planar maps with $n$ total edges, $\ell$ edges on its western boundary,  and $k$ edges on its eastern boundary where the probability of each map is proportional to its total weight.  As in \cite{kmsw2019bipolar}, we assume that $\sum_{k=2}^\infty \weight_{k} z^{k}$ has a positive radius of convergence $R$ and 
\begin{align}
\label{eq:weight_ineq}
1 \leq \sum_{k =  2}^{\infty}\frac{(k-1)(k-2)}{2}\weight_{k}R^{k}.
\end{align}
The right-hand side of~\eqref{eq:weight_ineq} increases monotonically  from $0$ and is continuous on $[0,R)$. This implies the existence of $\lambda \in (0,R]$ such that 
\begin{align}\label{eq:1}
1= \sum_{k= 2}^{\infty}\frac{(k-1)(k-2)}{2}\weight_{k}\lambda^{k}.
\end{align}
Since $\weight_{k}>0$ for some $k \geq 3$, we have that $\lambda$ is finite.

Next let $\weight_{0}= 1$ and define
\begin{align}
\label{eq:2}
C= \frac{1}{\lambda^{2}}+\sum_{k= 2}^{\infty}(k-1)\weight_{k}\lambda^{k-2},
\end{align}
which by our hypothesis is finite, and define $p_{k}= \weight_{k}\lambda^{k-2}/C$ for $k \geq 0$. Then the $p_{k}$'s define a random walk $(X_m,Y_m)$ in $\Z^{2}$, which assigns probabilities $p_{0}$ and $p_{i+j+2}$ to steps $\move_{e}$ and $\move_{i,j}$ respectively and note that $p_1 = 0$ (we remind the reader that there are $k-1$ possible steps of type $\move_{i,j}$ where $i+j=k-2$, corresponding to a $k$-gon).  It is also shown in~\cite{kmsw2019bipolar} that the random walk with step distribution defined by the $p_k$'s has zero mean.

\begin{lemma}\label{lem:condition_on_a_k}
Fix $\alpha \in (1,2)$. Then it holds that $p_k = C_1 k^{-\alpha - 2}$ for all $k \geq 2$ for some finite and positive constant $C_1$ if and only if $\weight_k = C_0 L^{-k} k^{-\alpha - 2}$ for all $k \geq 2$ for some $L > 0$ and
\[ C_0 = \left(\sum_{k=2}^{\infty}\frac{(k-1)(k-2)}{2k^{\alpha + 2}}\right)^{-1}.\]
Moreover we have that 
\begin{align*}
C_1  = C_0 \left(1 + C_0 \sum_{k=2}^{\infty} (k-1)k^{-\alpha - 2}\right)^{-1}.
\end{align*}
\end{lemma}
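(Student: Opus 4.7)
The plan is to treat the lemma as a direct computation: the left-hand characterization of the $p_k$'s is an image of the right-hand characterization of the $\weight_k$'s under the explicit map $\weight_k \mapsto \weight_k \lambda^{k-2}/C$, so both directions reduce to inverting this map and checking a compatibility equation.

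For the $(\Leftarrow)$ direction, I will substitute $\weight_k = C_0 L^{-k} k^{-\alpha-2}$ into the defining equation \eqref{eq:1} for $\lambda$, obtaining
\begin{align*}
1 = C_0 \sum_{k=2}^{\infty} \frac{(k-1)(k-2)}{2 k^{\alpha+2}} (\lambda/L)^{k}.
\end{align*}
With the stated choice of $C_0$, the value $\lambda = L$ solves this, and since the right side is strictly increasing in $\lambda \in (0, R]$ it is the unique solution (so $\lambda$ is well defined and equals $L$). Substituting $\lambda = L$ into \eqref{eq:2} yields
\begin{align*}
C = \frac{1}{L^2}\left(1 + C_0 \sum_{k=2}^{\infty}(k-1)k^{-\alpha-2}\right),
\end{align*}
and then $p_k = \weight_k \lambda^{k-2}/C$ collapses (the $L^{k-2}$'s and $L^{-k}$'s combining with the $1/L^2$ from $C$) to $C_1 k^{-\alpha-2}$ with $C_1$ as in the displayed formula. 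I would also note in passing that the sums appearing in the formulas for $C_0$ and $C_1$ converge because $\alpha > 1$ guarantees $(k-1)(k-2) k^{-\alpha-2}$ is summable.

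For the $(\Rightarrow)$ direction, I start from $p_k = C_1 k^{-\alpha-2}$ for $k \geq 2$, solve the relation $p_k = \weight_k \lambda^{k-2}/C$ for $\weight_k$, and read off
\begin{align*}
\weight_k = C\,C_1\, \lambda^{-(k-2)}\, k^{-\alpha-2},
\end{align*}
which is of the claimed form $C_0 L^{-k} k^{-\alpha-2}$ upon setting $L = \lambda$ and $C_0 = C C_1 \lambda^{2}$. To identify $C_0$ with the prescribed constant, I plug this expression for $\weight_k$ back into \eqref{eq:1}, which yields exactly
\begin{align*}
1 = C_0 \sum_{k=2}^{\infty}\frac{(k-1)(k-2)}{2 k^{\alpha+2}},
\end{align*}
and hence $C_0 = \big(\sum_{k=2}^{\infty}(k-1)(k-2)/(2 k^{\alpha+2})\big)^{-1}$. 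Finally the moreover clause, namely the formula for $C_1$ in terms of $C_0$, follows from the $(\Leftarrow)$ direction applied to this recovered $\weight_k$.

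I do not anticipate any serious obstacles: everything is elementary algebra once the substitutions are carefully tracked. The only mildly delicate point is making sure that in the forward direction the value $\lambda = L$ is indeed the $\lambda$ produced by \eqref{eq:1}, which requires a short remark about uniqueness (monotonicity of the series defining the right-hand side of \eqref{eq:weight_ineq}, already noted in the paper), and that the normalization constants in the two directions are consistent.
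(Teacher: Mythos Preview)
Your proposal is correct and follows essentially the same approach as the paper's proof: both directions are handled by substituting into \eqref{eq:1} and \eqref{eq:2} and tracking the normalization constants, with the only difference being that the paper treats the $(\Rightarrow)$ direction first and derives the formula for $C_1$ there, whereas you do $(\Leftarrow)$ first and invoke it at the end. The algebra and the key observation that $\lambda = L$ (via uniqueness/monotonicity of the defining series) are identical.
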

\begin{proof}
Suppose there exists a constant $C_1 > 0$ so that $p_{k}= C_1 k^{-\alpha - 2}$ for each $k \geq 2$. Then, $\weight_{k}= \frac{\lambda^{2}C C_1}{\lambda^{k}k^{\alpha+2}}$ for each $k \geq 2$ and $R^{-1}= \limsup_{k \to \infty}\weight_{k}^{1/k}= \lambda^{-1} = L^{-1}$, where $C$ is defined as in~\eqref{eq:2}.  Hence~\eqref{eq:1} implies that
\begin{align*}
1 = \lambda^2 C C_1 \sum_{k=2}^{\infty}\frac{(k-1)(k-2)}{2k^{\alpha + 2}} = \frac{\lambda^2 C C_1}{C_0}
\end{align*}
and so 
\begin{align}\label{eq:3}
\lambda^2 C = \frac{C_0}{C_1}.
\end{align}
Moreover~\eqref{eq:2} implies that
\begin{align*}
C &= \frac{1}{\lambda^2} + C \sum_{k=2}^{\infty}(k-1)p_k
 =\frac{1}{\lambda^2} + C C_1 \sum_{k=2}^{\infty}\frac{k-1}{k^{\alpha + 2}}
\end{align*}
and so using~\eqref{eq:3} in the second equality we have that
\begin{align}
\label{eq:4}
\lambda^2 C &=1 + \lambda^{2} C C_1 \sum_{k=2}^{\infty}\frac{k-1}{k^{\alpha + 2}}
 =1 + C_0 \sum_{k=2}^{\infty}\frac{k-1}{k^{\alpha + 2}}.
\end{align}
By~\eqref{eq:3} and~\eqref{eq:4} we obtain that
\begin{align*}
\frac{C_0}{C_1} = 1 + C_0 \sum_{k=2}^{\infty} \frac{k-1}{k^{\alpha + 2}}
\end{align*}
and so rearranging gives that $C_1 = C_1(\alpha)$.

Conversely, suppose that $\weight_{k}= C_{0} L^{-k}k^{-\alpha-2}$ for all $k \geq 2$ and for some constant $L > 0$, where $C_0$ is as in the lemma statement. Then the definition of $C_0$ implies that
\begin{align*}
\sum_{k=2}^{\infty}\frac{(k-1)(k-2)}{2}\weight_k L^k = 1
\end{align*}
and so $L = \lambda$ and $p_k = C_0L^{-2} C^{-1} k^{-\alpha - 2}$ for all $k \geq 2$. Thus it suffices to show that $\frac{C_0}{L^2 C} = C_1(\alpha)$. But this follows by~\eqref{eq:2}, the definition of $C_0$ and the fact that $L = \lambda$.
\end{proof}

\section{Path scaling limit}
\label{sec:scaling_limit}

We will now give the proof of Theorem~\ref{thm:boundary_length_conv}.  Some parts of the argument that we will give are based on \cite{rg1979bivariate}.  Fix $\alpha \in (1,2)$ and let $p_0$, $p_k$ for $k \geq 2$ be as in Lemma~\ref{lem:condition_on_a_k}. Let $(X_{n},Y_{n})_{n\in \N}$ be the lattice walk in $\Z^{2}$ starting from $(0,0)$ with i.i.d.\ increments satisfying $\p[(X_{n}-X_{n-1},Y_{n}-Y_{n-1})= (1,-1)]= p_{0}$ and $\p[(X_{n}-X_{n-1},Y_{n}-Y_{n-1})=(-i,j)]= p_{i+j+2}$ for each $ n \in \N$ and $ i,j\in \N_{0}$.  Also, for $1\leq k \leq n$
we define 
\begin{align*}
W_{k,n}= n^{-1/\alpha} (X_{k}-X_{k-1},Y_{k}-Y_{k-1}).
\end{align*}
Consider the Radon measure $\nu$ on $\R^{2}\setminus \{0\}$ where 
\begin{align*}
d\nu(x,y)= C_1 \one_{(-\infty,0)}(x) \one_{(0,\infty)}(y)(-x+y)^{-\alpha-2}dxdy
\end{align*}
with $dxdy$ denoting Lebesgue measure on $\R^2$ and $C_1$ is as in Lemma~\ref{lem:condition_on_a_k}.  Let $\wt{W} = (\wt{W}^1,\wt{W}^2)$ be the L\'evy process in $\R^{2}$ starting from $(0,0)$ and with characteristic exponent $\psi$ given by 
\begin{align*}
\int_{\R^{2}}(1-e^{i(\lambda,x)}+i(\lambda,x)\one_{|x| \leq 1}) d\nu(x) \quad\text{for each} \quad  \lambda \in \R^{2}.
\end{align*}
We also define $\wt{W}_{t}$ for $t \leq 0$ by letting $W'$ be an independent copy of $\wt{W}$ and then setting $\wt{W}_t = - W_{-t}'$ for $t \leq 0$.  We obviously have that $\nu$ is the corresponding L\'evy measure of $\wt{W}_{t}$.  Recall that if $\overline{\R^d}$ is the one-point compactification of $\R^d$,  then a sequence $(\nu_{n})$ of Radon measures on $\overline{\R^d} \setminus \{0\}$ converges vaguely to a Radon measure $\nu$ on $\overline{\R^d} \setminus \{0\}$ if 
\begin{align*}
\lim_{n \to \infty}\nu_{n}(B)= \nu(B)
\end{align*}
for every Borel subset $B$ of $\overline{\R^{d}} \setminus \{0\}$ such that $0 \notin \overline{B}$, and  $\nu(\partial B) = 0$ or equivalently 
\begin{align*}
\lim_{n \to \infty}\int_{\R^{d}}f(x) d\nu_{n}(x)=\int_{\R^{d}}f(x) d\nu(x)
\end{align*}
for every continuous function $f$ on $\overline{\R^{d}} \setminus \{0\}$ whose support is contained in a Borel subset $B $ of $\overline{\R^{d}} \setminus \{0\}$ with $0 \notin \overline{B}$.

We are now ready to prove Theorem~\ref{thm:boundary_length_conv}.

\begin{proof}[Proof of Theorem~\ref{thm:boundary_length_conv}]
We will prove that $(n^{-1/\alpha} (X_{nt},Y_{nt}))_{t \in \R}$ converges in distribution  as $n \to \infty$ to $W_t = \wt{W}_{t}+p(\alpha)t$ with respect to the Skorokhod topology on $D^2 = D(\R) \times D(\R)$ where 
\begin{align*}
m(\alpha) = \frac{C_1}{(\alpha - 1)(\alpha + 1)} + \frac{C_1}{\alpha + 1} \int_{0}^1 \frac{x dx}{(x + \sqrt{1-x^2})^{\alpha + 1}} \quad \text{and} \quad p(\alpha) = (m(\alpha),-m(\alpha)).
\end{align*}
Note that $p(\alpha) = -\E[\wt{W}_1]$ and that $\wt{W}_t + p(\alpha)t$ is a martingale.  To show this,  it suffices to prove that the convergence holds on $D([0,\infty)) \times D([0,\infty))$ when both of the processes are indexed by $[0,\infty)$.

Set 
\begin{align*}
Z_{n}(t)= \sum_{1 \leq i \leq nt}W_{i,n} - \sum_{1 \leq i \leq nt}\E[W_{i,n}\one_{ |W_{i,n}|\leq 1}],\quad n \in \N,\quad t \geq 0
\end{align*}
and let $\nu_{n}$ be the Radon measure defined by $\nu_{n}(B)= n\p[W_{1,n}\in B]$ for every Borel set $B$ in $\R^{2}\setminus \{0\}$. For $\delta \in (0,1)$ we also set 
\begin{align*}
&Z_{n,\delta}(t) = \sum_{1 \leq i \leq nt}W_{i,n}\one_{|W_{i,n}| \geq \delta} - \sum_{1 \leq i \leq nt} \E[W_{i,n} \one_{|W_{i,n}| \in (\delta,1)}] \quad\text{and}\\
&Z_{\delta}(t) = \sum_{t_{k} \leq t} j_{k} \one_{|j_k| \geq \delta} -t \int_{|u| \in (\delta,1)} u d\nu(u)
\end{align*}
where $(t_k,j_k)_{k \in \N}$ are the jump time/magnitude pairs of $\wt{W}$.  Note that
\begin{align*}
\wt{W}_t = \sum_{t_k \leq t} j_k \one_{|j_k| \geq 1} + \lim_{\delta \to 0}\!\left( \sum_{t_k \leq t} \one_{|j_k| \in (\delta , 1)} - \E\!\left[ \sum_{t_k \leq t} j_k \one_{|j_k| \in (\delta , 1)} \right] \right)
\end{align*}
and the limit is almost sure and uniform on compact time intervals \cite{bertoin1996levy}. We will deduce the claim in three steps.  In {\it Step 1}, we show that $\nu_n$ converges vaguely to $\nu$ as $n \to \infty$. In {\it Step 2}, we  prove the weak convergence of $Z_{n,\delta}$ to $Z_{\delta}$ as $n \to \infty$ in $D^2$ using {\it Step 1} for every fixed $\delta \in (0,1)$. Finally, in {\it Step 3}, we show that Prokhorov distance between $Z_{n,\delta}$ and $Z_{n}$ in $D^2$ on compact time intervals can be made arbitrarily small when $\delta > 0$ is small, uniformly in $n$. We then conclude the proof by the weak convergence of~$Z_{\delta}$ to~$W$.

\noindent{\it Step 1}. Firstly, we claim that the sequence of measures $\nu_{n}$ converges vaguely  to $\nu$ as $n \to \infty$. Indeed, it suffices to show that for every Borel set $B$ in $\R^{2} \setminus \{0\}$ such that $0 \notin \overline{B}$ and $\nu(\partial{B})= 0$, we have that $\nu_n(B) \to \nu(B)$ as $n \to \infty$. It is easy to verify that $\nu_{n}(B)$ converges to $\nu(B)$ as $n \to \infty$ if $B$ is a rectangle (either open or closed). Now, let $U$ be an open subset of $\R^{2}$ such that $0 \notin \overline{U}$. Then $U$ can be written as a countable union of open rectangles $(x,y)\times (z,w)$ such that either $0 \notin [x,y]$ or $0 \notin [z,w]$. Hence, there exists a sequence of Borel  sets $I_{n}$ with $I_{n}\subseteq I_{n+1}$ such that $0 \notin \overline{I_n}$, $U= \cup{I_{n}}$ and $I_{n}$ is a finite union of rectangles $I_{n,1},\ldots,I_{n,k_{n}}$ whose interiors are pairwise disjoint. Therefore, we obtain that 
\begin{align*}
    \nu(U) = \lim_{n \to \infty}\nu(I_{n})= \lim_{n \to \infty}\sum_{j=1}^{k_{n}}\nu(I_{n,j}) \quad\text{and}\quad
    \nu(I_{n}) = \lim_{m \to \infty}\sum_{j= 1}^{k_{n}}\nu_{m}(I_{n,j})= \lim_{m \to \infty}\nu_{m}(I_{n}).
\end{align*}
Thus as $\nu_m(I_n) \leq \nu_m(U)$ we see that
\begin{align*}
   \nu(I_{n})\leq \liminf\limits_{m \to \infty}\nu_m(U) \quad\text{for all}\quad n \in \N.
\end{align*}
By letting $n \to \infty$, we obtain that 
\begin{align}\label{eq:5}
\nu(U)\leq \liminf\limits_{m \to \infty}\nu_{m}(U).
\end{align}
Now, let $B$ be a Borel subset of $\R^{2} \setminus \{0\}$ such that $0 \notin \overline{B}$  and $\nu(\partial{B})= 0$. Then there exists $r>0$ such that $B \subseteq{U}$ where $U = \R^2 \setminus [-r,r]^2$.  Note that $\nu(U) < \infty$.  Since $U$ is a finite union of pairwise disjoint rectangles, we have that 
\begin{align}\label{eq:6}
\nu(U)=\lim_{n \to \infty}\nu_n(U).
\end{align}
Then~\eqref{eq:5} and~\eqref{eq:6} together with $\nu(U) = \nu(\ol{U})$ imply that 
\begin{align*}
\nu(U)-\nu(\overline{B})\leq \liminf\limits_{n \to \infty}\nu_{n}(U \setminus \overline{B})= \nu(U)-\limsup_{n \to \infty}\nu_{n}(\overline{B}) 
\end{align*}
and hence with $B^\circ$ denoting the interior of $B$ we have
\begin{align*}
       \limsup_{n \to \infty}\nu_{n}(\overline{B})\leq \nu(B)= \nu(\overline{B}) \quad\text{and}\quad
      \nu(B)= \nu(B^{\circ})\leq \liminf\limits_{n \to \infty}\nu_{n}(B).
\end{align*}
Therefore, we have  that $\nu(B)= \lim_{n \to \infty}\nu_{n}(B)$ and since $B$ was arbitrary, we have shown the claim.

\noindent{\it Step 2}. The vague convergence of $(\nu_n)$ to $\nu$ combined with the continuous mapping theorem applied to the summation functional implies that
\begin{align*}
\left(t \mapsto \sum_{1 \leq i \leq n t}W_{i,n}\one_{|W_{i,n}| \geq \delta}\right) \to \left(t \mapsto \sum_{t_{k} \leq t}j_{k}\one_{ |j_{k}| \geq \delta} \right) \quad\text{as}\quad n \to \infty
\end{align*}
in distribution in $D^2$.  Also 
\begin{align*}
\lim_{n \to \infty}\E\!\left[\sum_{i= 1}^{n}W_{i,n}\one_{|W_{i,n}|\in (\delta,1)}\right] = \lim_{n \to \infty}n\E\!\left[W_{i,n}\one_{|W_{i,n}|\in (\delta,1)}\right]= \int_{|u|\in (\delta,1)}u d\nu(u)
\end{align*}
and hence we obtain for every fixed $\delta \in (0,1)$ that $Z_{n,\delta}$ converges in distribution to $Z_{\delta}$ as $n \to \infty$.

\noindent{\it Step 3}. Now we show that $Z_{n,\delta}$ and $Z_{n}$ on compact time intervals are close in the Prokhorov distance uniformly in~$n$, for $\delta > 0$ sufficiently small. Fix $\epsilon, \delta \in (0,1)$. Then if $\rho$ is the Shorokhod metric in $D^2$,  by restricting to $D([0,1]) \times D([0,1])$ we have that 
\begin{align}
 \p[\rho(Z_{n,\delta},Z_{n})>\epsilon]
 &\leq \p[\sup_{0 \leq t \leq 1}|Z_{\delta,n}(t) - Z_{n}(t)| > \epsilon] \nonumber\\
 &=\p\!\left[ \sup_{1 \leq k \leq n} \left| \sum_{i=1}^k W_{i,n}\one_{|W_{i,n}| \leq \delta} - \E[ W_{i,n} \one_{|W_{i,n}| \leq \delta}]\right| > \epsilon \right] \nonumber \\
&\leq \sum_{j= 1}^{2}\p\!\left[\sup_{1 \leq k \leq n}\left|\sum_{i= 1}^{k}W_{i,n}^{j}\one_{|W_{i,n}|\leq \delta}-\E[W_{i,n}^{j}\one_{|W_{i,n}|\leq \delta}]\right|>\frac{\epsilon}{2}\right] \nonumber\\
&\leq \sum_{j=1}^2 \frac{4}{\epsilon^2} \var\!\left[ \sum_{i=1}^n W_{i,n}^j \one_{|W_{i,n}| \leq \delta} \right] \quad\text{(by Kolmogorov's inequality)}\nonumber \\
& \leq \sum_{j= 1}^{2}\frac{4}{\epsilon^{2}}n\E[|W_{1,n}^{j}|^{2}\one_{|W_{1,n}|\leq \delta}] \leq \sum_{j=1}^2 \frac{4}{\epsilon^2} n \E[ |W_{1,n}^j |^2 \one_{|W_{1,n}^j| \leq \delta} ] \label{eq:8}.
\end{align}
Note that there is a finite constant $C$ depending only on $\alpha$ such that for all $0< \eta < \delta$ we have that
\begin{align*}
n \E[ |W_{1,n}^j |^2 \one_{|W_{1,n}^j| \leq \eta} ] \leq C \eta^{\frac{2}{\alpha}-1} \quad \text{for}\quad j=1,2
\end{align*}
for all $n$ sufficiently large.  Also the vague convergence of $(\nu_n)$ to $\nu$ implies that
\begin{align*}
n \E[ |W_{1,n}^j |^2 \one_{|W_{1,n}^j| \in (\eta,\delta]} ] \to \int_{\eta <|x| \leq \delta}|x|^{2} d\nu^{j}(x) \quad \text{for}\quad j=1,2
\end{align*}
as $n \to \infty$ where $\nu^{1}$, $\nu^{2}$ are the L\'evy measures of $\wt{W}^{1}$, $\wt{W}^{2}$  respectively.  Combining and since $\alpha \in (1,2)$,  we obtain that the right hand side of~\eqref{eq:8} converges to $\sum_{j= 1}^{2}\frac{4}{\epsilon^{2}}\int_{|x|\leq \delta}|x|^{2} d\nu^{j}(x)$.  Since $\int_{|x|\leq \delta}|x|^{2} d\nu^{j}(x)<\infty$ as $\delta \to 0$, we obtain that 
\begin{align}
\label{eq:9}
\lim_{\delta \to 0}\limsup_{n \to \infty}\p[\rho(Z_{n,\delta},Z_{n})>\epsilon)]=0.
\end{align}

Let
\begin{align*}
  a_n = n^{1/\alpha},\quad b_{n}= \E\left[X_{1}\one_{X_{1}^{2}+Y_{1}^{2}\leq a_n^2}\right], \quad\text{and}\quad
  c_{n}= \E\left[Y_{1}\one_{X_{1}^{2}+Y_{1}^{2}\leq a_n^2}\right].
\end{align*}
 Combining~\eqref{eq:9} with the convergence of $Z_{n,\delta}$ to $Z_{\delta}$ as $n \to \infty$ for every fixed $\delta \in (0,1)$ and the convergence of $Z_{\delta}$ to $\wt{W}$ as $\delta \to 0$, we obtain that
\begin{align*}
a_n^{-1} \left(X_{nt}-nt b_{n},Y_{nt}-ntc_{n}\right) \quad\text{for}\quad t \geq 0
\end{align*}
converges in distribution as $n \to \infty$ (with respect to the Skorokhod topology in $D^2$) to $\wt{W}$.  Therefore $(n^{-1/\alpha} (X_{nt},Y_{nt}))_{t \in \R}$  converges in distribution to $W_t$ as $n \to \infty$ since
\begin{align*}
 \lim_{n \to \infty}\frac{nb_{n}}{a_{n}}= m(\alpha) \quad\text{and}\quad \lim_{n \to \infty}\frac{nc_n}{a_n} = -m(\alpha).
\end{align*} 
Finally,  in order to complete the proof,  it remains to show that the process $W$ is the same as the one considered in the statement of the theorem.  To show this,  we let $(t_k,  j_k,  u_k)_{k \in \N}$ be a Poisson point process with intensity measure given by $\mu = \Pi \times m$ where $m$ is the restriction of the one-dimensional Lebesgue measure on $[0,1]$ and $\Pi(dx) = C_1 x^{-\alpha-1}\one_{x > 0} dx$.  We also consider the measurable function $F : \R_+ \times \R_+ \times [0,1] \mapsto \R_+ \times \R_- \times \R_+$ with $F(t,x,u) = (t,-ux,(1-u)x)$.  Then we have that 
\begin{align*}
(F(t_k,j_k,u_k))_{k \in \N} = ((t_k,-u_k j_k,  (1-u_k)j_k))_{k \in \N}
\end{align*}
is a Poisson point process with intensity measure $\wt{\mu} = \mu_{*}F$.  It is easy to see that $\wt{\mu}$ and $\nu$ agree on rectangles of the form $[a,b] \times [c,d]$ for $a<b<0<c<d$ and so this implies that $\wt{\mu}$ and $\nu$ coincide.  This completes the proof.
\end{proof}

\section{Local total variation convergence}
\label{sec:total_variation}

Fix $\alpha \in (1,2)$ and assume that we have chosen weights $(\weight_k)_{k \geq 2}$ satisfying the hypothesis of the statement of Lemma~\ref{lem:condition_on_a_k}.  Fix $A, B > 0$, let $A_n = \lfloor A n^{1/\alpha} \rfloor$, and $B_n = \lfloor B n^{1/\alpha} \rfloor$.  Let $G_n$ be a $\BPRPM$ with $n+1$ edges sampled with probability proportional to its weight and conditioned such that it has $A_n + 1$ (resp.\ $B_n + 1$) edges on its western (resp.\ eastern) boundary.  Note that Theorem~\ref{thm:bipolar_maps_bijection} and Lemma~\ref{lem:condition_on_a_k} imply that the law of $G_n$ can be viewed as the law on length-$n$ paths from $(0, A_n)$ to $(B_n,0)$ in $\N_0^2$ with increments in $S = \{(1,-1),\,(-i,j),\,i,j \in \N_0 \}$ and step distribution $\mu = \{p_0,\,p_{i+j+2},\,i,j \in \N_0 \}$. Moreover, let $w_0,\ldots w_{n-1}$ be the sequence of steps in $S$ which encodes $G_n$ as in Section~\ref{sec:Bipolar}. For $n \in \N$ we let $\mu_n$ be the uniform measure on lattice paths on $\Z^2$ of length $n$ whose step distribution is given by $\mu$.  Next, we fix $r \in \N$ and let $(U_n)$ be a sequence of independent random variables where~$U_n$ has the uniform distribution on the set $\{r,\ldots,n-r\}$ for $2r \leq n$. Consider the random variable $M_{n,2r+1} = (w_{U_{n} - r},\ldots,w_{U_{n}+r})$ and let $m_{n,2r+1}$ be its law. Then we have the following result for the asymptotic local structure of $G_n$ as $n \to \infty$.

\begin{theorem}
\label{thm:local_convergence}
 The sequence of measures $m_{n,2r+1}$ on $S^{2r+1}$ converges in total variation to $\mu_{2r+1}$ as $n \to \infty$.
 \end{theorem}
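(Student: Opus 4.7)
The plan is to reduce the total variation statement to a pointwise two-dimensional local central limit theorem for the unconditioned walk. Let $(X_n, Y_n)_{n \in \N}$ denote the random walk on $\Z^2$ with i.i.d.\ $\mu$-distributed increments, starting from $(0,0)$. By Theorem~\ref{thm:bipolar_maps_bijection} together with Lemma~\ref{lem:condition_on_a_k}, the law of the step sequence $(w_0, \ldots, w_{n-1})$ encoding $G_n$ coincides with the conditional law of $n$ i.i.d.\ $\mu$-distributed increments given the bridge constraint $\sum_{i=0}^{n-1} w_i = (B_n, -A_n)$. The first observation is that by exchangeability of i.i.d.\ increments, the conditional joint law of any window $(w_{k-r}, \ldots, w_{k+r})$ with $r \leq k \leq n-r$ does not depend on $k$, so the uniform position $U_n$ plays no role. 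Hence for each fixed $s = (s_{-r}, \ldots, s_r) \in S^{2r+1}$,
\begin{align*}
m_{n,2r+1}(s) = \mu_{2r+1}(s) \, R_n(s), \quad R_n(s) := \frac{\p[(X_{n - 2r - 1}, Y_{n - 2r - 1}) = (B_n, -A_n) - \sigma(s)]}{\p[(X_n, Y_n) = (B_n, -A_n)]},
\end{align*}
where $\sigma(s) = \sum_{i=-r}^{r} s_i \in \Z^2$.

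Next I would show that $R_n(s) \to 1$ for each fixed $s$. This is a two-dimensional local limit theorem at the point $(B_n, -A_n) \sim n^{1/\alpha}(B, -A)$. Since by Theorem~\ref{thm:boundary_length_conv} the step distribution $\mu$ lies in the domain of attraction of the two-dimensional $\alpha$-stable L\'evy process $W$ with density $g$ for $W_1$, the Gnedenko-type local limit theorem for stable laws yields
\begin{align*}
n^{2/\alpha} \p[(X_n, Y_n) = x_n] \longrightarrow g(y) \quad\text{whenever}\quad n^{-1/\alpha} x_n \to y,
\end{align*}
provided that $g(y) > 0$ and the walk satisfies the standard aperiodicity condition on $\Z^2$ (which holds here since, e.g., the jumps $(1,-1)$, $(0,1) = \move_{0,1}$, and $(-1,0) = \move_{1,0}$ all have positive mass and generate $\Z^2$). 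Applying this with $x_n = (B_n, -A_n)$ (so $y = (B, -A)$) and with $x_n = (B_n, -A_n) - \sigma(s)$ (same limit $y$, since $\sigma(s)$ is fixed), and using $(n - 2r -1)/n \to 1$, gives $R_n(s) \to 1$. Positivity of $g$ at $(B, -A)$ follows from standard smoothness and positivity results for $\alpha$-stable densities whose L\'evy measure has two-dimensional support, which is the case here since the jump measure has support in the entire open upper-left quadrant.

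Finally, to promote this pointwise convergence to total variation convergence on the countable set $S^{2r+1}$, I would invoke Scheff\'e's lemma. The functions $s \mapsto m_{n,2r+1}(s)$ and $s \mapsto \mu_{2r+1}(s)$ are probability mass functions, and by the identity above together with a direct telescoping $\sum_s \mu_{2r+1}(s) R_n(s) = 1$ as well, so pointwise convergence of $\mu_{2r+1}(s) R_n(s) \to \mu_{2r+1}(s)$ on $S^{2r+1}$ combined with equality of total masses forces $L^1$ convergence, which is exactly the claimed convergence in total variation. The main obstacle I anticipate is the careful invocation of the local CLT in the two-dimensional heavy-tailed setting: the increment distribution is concentrated on a proper subset of $\Z^2$ with an asymmetric heavy tail (the jumps live in the upper-left cone plus an atom at $(1,-1)$), and the limiting stable law has an anisotropic L\'evy measure. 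Verifying aperiodicity on $\Z^2$ and extracting a pointwise local CLT with the density $g$ strictly positive at $(B,-A)$ is the non-routine ingredient; granted this, the rest of the argument reduces to the exchangeability rewriting and Scheff\'e's lemma as above.
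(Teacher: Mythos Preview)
Your argument contains a genuine gap at the very first step. You claim that the law of $(w_0,\ldots,w_{n-1})$ encoding $G_n$ is the conditional law of $n$ i.i.d.\ $\mu$-increments given only the bridge constraint $\sum w_i = (B_n,-A_n)$, and then use exchangeability to conclude that the window law does not depend on $k$. But Theorem~\ref{thm:bipolar_maps_bijection} says that $G_n$ corresponds to a path from $(0,A_n)$ to $(B_n,0)$ \emph{in the non-negative quadrant} $\N_0^2$; the conditioning event is
\[
F_n = \{X_k,Y_k \geq 0 \text{ for all } 1 \leq k \leq n,\ (X_n,Y_n)=(B_n,0)\}
\]
(started from $(0,A_n)$), not merely the endpoint constraint. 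The quadrant constraint is path-dependent and destroys exchangeability: for small $k$ the walk is near the line $\{X=0\}$ and for $k$ near $n$ it is near $\{Y=0\}$, so the conditional law of the window genuinely depends on $k$. Consequently your factorization $m_{n,2r+1}(s) = \mu_{2r+1}(s)\,R_n(s)$ with $R_n$ a ratio of \emph{unconstrained} point probabilities is simply false, and the entire local-CLT/Scheff\'e reduction collapses.

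The paper handles exactly this difficulty by a different mechanism. It does \emph{not} attempt a local limit theorem under the quadrant conditioning. Instead it (i) proves a sub-exponential lower bound $\p_{(0,A_n)}[F_n] \geq q^{a_n}$ with $a_n = \lfloor n^{1/\alpha}\rfloor$ (Proposition~\ref{proposition:lower_bound_conditioning}), and (ii) uses Cram\'er's theorem for the unconditioned i.i.d.\ blocks $\one_{A_m^i}$ to get an exponentially small probability of deviation for the empirical frequency of the pattern $b$. Because the exponential decay from Cram\'er beats the sub-exponential denominator $\p[F_n]^{-1}$, the deviation bound survives the conditioning on $F_n$, and averaging over the $2r+1$ block offsets recovers $m_{n,2r+1}(b)$. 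The uniform position $U_n$ is therefore essential, not cosmetic: it is precisely what turns the statement into an assertion about an empirical average, which is what Cram\'er controls. If you want to rescue a local-CLT style argument you would need a two-dimensional local limit theorem for the walk \emph{conditioned to stay in the quadrant}, together with positivity of the limiting density at $(B,-A)$; that is a substantially harder input than the unconstrained Gnedenko statement you invoke.
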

 
Before we prove Theorem~\ref{thm:local_convergence}, we describe the main steps of its proof.  We first note that it suffices to show that $m_{n,2r+1}(b) \to \mu_{2r+1}(b)$ as $n \to \infty$ for all $b \in S^{2r+1}$.  Fix $b \in S^{2r+1}$ and let
\begin{align*}
A_{m}^i = \{w \in S^{\N_0}:\,w(i+j) = b_{j - (m-1)(2r+1)}\ \text{for all}\ (m-1)(2r+1) \leq j \leq m(2r+1)-1 \}
\end{align*}
Let also $F_n$ be the event that $X_{k},Y_{k} \geq 0$ for all $1 \leq k \leq n$ and $(X_{n},Y_{n}) = (B_n, 0)$, $k_n =  \lfloor \frac{n-2r}{2r+1} \rfloor$, and $C_{k_n}^i = \sum_{m=1}^{k_n} \one_{A_{m}^i}$.  We will control the behavior of $m_{n,2r+1}(b)$ by estimating
\begin{align}
\label{eq:conditioning_average}
\frac{1}{2r+1} \sum_{i=0}^{2r} \E\!\left[ \frac{C_{k_n}^i}{k_n}\,\middle | \, F_n \right].
\end{align}
Since for each $0 \leq i \leq 2r$ the sequence of random variables $(\one_{A_{m}^i})_{m \in \N}$ is i.i.d.\ with mean $p = \mu_{2r+1}(b)$,  Cramer's theorem implies for fixed $\epsilon > 0$ that off an event whose probability decays exponentially in $n$ (without conditioning on $F_n$), $k_n^{-1} C_{k_n}^i \in (p-\epsilon,p+\epsilon)$. Therefore, by~\eqref{eq:conditioning_average} it suffices to show that $\p[F_n]$ decays to $0$ as $n \to \infty$ more slowly than an exponential in $n$. In what follows, we let $a_n = \lfloor n^{1/\alpha} \rfloor$ and we will sometimes use the notation $\p_{(x,y)}$ for the law under which $(X_0,Y_0) = (x,y)$ and $\E_{(x,y)}$ for the corresponding expectation.

\begin{proposition}
\label{proposition:lower_bound_conditioning}
There exists a constant $q \in (0,1)$ (depending on $A,B$) such that
\begin{align*}
 \p_{(0,A_n)}[ X_{k},Y_{k} \geq 0 \ \text{for all}\  1\leq k\leq n ,\  X_{n}= B_n,Y_{n}= 0] \geq q^{a_n} \quad\text{for all}\quad n \in \N.
\end{align*}
\end{proposition}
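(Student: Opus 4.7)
The plan is to prove a polynomial-in-$n$ lower bound on $\p_{(0,A_n)}[F_n]$ multiplied by the combinatorial factor $p_0^{A_n}$; since $A_n = \lfloor A\, n^{1/\alpha} \rfloor$ is of the same order as $a_n$, this immediately dominates $q^{a_n}$ for $q$ slightly larger than $p_0^{A}$. The factor $p_0^{A_n}$ is in fact intrinsic, since every admissible lattice path from $(0,A_n)$ to $(B_n,0)$ must contain at least $A_n$ copies of the move $(1,-1)$ (the only step that decreases the second coordinate), so the scale $q^{a_n}$ cannot be avoided. On the other hand, Theorem~\ref{thm:boundary_length_conv} predicts that once this unavoidable combinatorial cost is paid, the residual cost is only polynomial in $n$. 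I will implement this via a three-stage decomposition of the $n$-step walk, in which the last block consists of $k_n := \lfloor \eta\, n^{1/\alpha} \rfloor$ forced $(1,-1)$ moves that carry the walk diagonally to the boundary target.

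For Stage~1, covering steps $0$ through $n/2$, I apply Theorem~\ref{thm:boundary_length_conv} to the shifted walk $(X_k,\,Y_k - A_n)$ and use positivity of the time-$1/2$ transition density of the limiting two-dimensional $\alpha$-stable L\'evy process, together with positivity of the stay-positive event on $[0,1/2]$, to obtain a constant $p_1 > 0$ with
\begin{align*}
\p_{(0,A_n)}\!\left[X_k,Y_k \geq 0\ \text{for}\ k \leq n/2,\ (X_{n/2},Y_{n/2}) \in R_n\right] \geq p_1, \quad R_n := [\eta\, n^{1/\alpha},\, K\, n^{1/\alpha}]^2,
\end{align*}
for all large $n$, with $0 < \eta < K$ suitably chosen. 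For Stage~2, covering steps $n/2$ through $n - k_n$, I show that from any $(x,y) \in R_n$ the walk hits the interior target $(B_n - k_n,\, k_n)$ at time $n - k_n$ while remaining non-negative, with probability at least $p_2/n^{2/\alpha}$; this combines a local limit theorem for the two-dimensional $\alpha$-stable lattice walk (giving a point probability of order $n^{-2/\alpha}$ since the scaling limit has an everywhere positive density) with a scaling argument showing the stay-positive event has positive conditional probability, because both endpoints lie at distance of order $n^{1/\alpha}$ from the boundary. Stage~3 consists of prescribing that all $k_n$ final steps equal $(1,-1)$, which deterministically sends $(B_n - k_n,\, k_n)$ along the diagonal to $(B_n,0)$ with both coordinates staying in $\N_0$ and contributes the factor $p_0^{k_n} \geq e^{-c_0 n^{1/\alpha}}$, with $c_0 = \eta \log(1/p_0)$.

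Multiplying the three stage probabilities via the Markov property at times $n/2$ and $n - k_n$ yields
\begin{align*}
\p_{(0,A_n)}[F_n] \geq p_1\, p_2\, n^{-2/\alpha}\, e^{-c_0 n^{1/\alpha}},
\end{align*}
which exceeds $q^{a_n}$ for every fixed $q \in (e^{-c_0},1)$ once $n$ is large. The main obstacle I expect is Stage~2, namely pairing a point local-limit estimate with the quadrant-confinement constraint in the heavy-tailed regime. If a conditioned LCLT in the spirit of Denisov--Wachtel is not directly available for our specific lattice step distribution, the natural workaround is to subdivide Stage~2 further: reapply Theorem~\ref{thm:boundary_length_conv} on a sub-window $[n/2,\,n/2+\delta n]$ to land with positive probability at a deterministic interior point, and then invoke only an unconditional two-dimensional LCLT on the remaining interval $[n/2+\delta n,\, n - k_n]$, choosing $\delta$ small enough that the walk's typical displacement on that interval is much smaller than the current distance to the boundary, so that the stay-positive event has positive conditional probability uniformly in $n$.
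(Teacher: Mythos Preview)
Your Stage~1 claim is where the argument breaks. You assert that
\[
\p_{(0,A_n)}\!\bigl[X_k,Y_k \geq 0\ \text{for}\ k \leq n/2,\ (X_{n/2},Y_{n/2}) \in R_n\bigr] \geq p_1 > 0
\]
uniformly in $n$, appealing to ``positivity of the stay-positive event on $[0,1/2]$'' for the limiting process. But the walk starts at $X_0 = 0$, i.e.\ on the boundary of the quadrant, and the limiting first coordinate $W^1$ is a spectrally negative $\alpha$-stable L\'evy process with zero mean. For such a process, $0$ is regular for $(-\infty,0)$ (the process has unbounded variation since $\alpha>1$), so $\p[\inf_{0\le t\le 1/2} W^1_t \geq 0]=0$. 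At the discrete level, $\p_{0}[X_k\ge 0,\ 1\le k\le n/2]$ decays polynomially in $n$, not to a positive constant; Theorem~\ref{thm:boundary_length_conv} cannot give you $p_1>0$ here. The overall bound you want would survive replacing $p_1$ by a polynomial factor (the exponential $e^{-c_0 n^{1/\alpha}}$ absorbs it), but you have not supplied that polynomial lower bound, and doing so for a two-dimensional walk conditioned to stay in a quadrant with one coordinate starting on the boundary requires real work.

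The paper sidesteps this entirely: its first block forces $A_n^1\asymp n^{1/\alpha}$ consecutive $(1,-1)$ moves, which costs $p_0^{A_n^1}$ and carries the walk from $(0,A_n)$ to $(A_n^1,A_n^2)$, a point at distance $\asymp n^{1/\alpha}$ from \emph{both} axes. Only then does it invoke a scaling-limit argument (Proposition~\ref{proposition:lower_bound}) to keep the walk in a box for the bulk of the time, which now gives a genuine constant because the box is in the interior. For the endgame the paper also avoids your LCLT in Stage~2: from any point in the box it writes down an explicit sequence of $O(n^{1/\alpha})$ moves (one large $\move_{0,B_n}$ jump, a run of $\move_e$'s, one large $\move_{x+y,0}$ jump, then $\move_{0,0}$ padding) that lands exactly at $(B_n,0)$ while staying in $\N_0^2$, at cost $\ge q^{a_n}$. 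This is both simpler and more robust than invoking a conditioned local limit theorem in the heavy-tailed regime.
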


Let us now describe the strategy that we will use to prove Proposition~\ref{proposition:lower_bound_conditioning}.  Suppose that $(X_0 , Y_0) = (0,A_n)$.  In the proof, we will choose $A^1,A^2 > 0$ such that $A = A^1 + A^2$.  Let $A_n^1 = \lfloor A^1 n^{1/\alpha} \rfloor$ and $A_n^2 = A_n - A_n^1$.  Then with probability at least $p_0^{A_n^1}$ (by considering $A_n^1$ consecutive steps of the form $(1,-1)$), we have that the random walk stays in $\N_0^2$ for the first $A_n^1$ steps and $(X_{A_n^1}, Y_{A_n^1}) = (A_n^1 , A_n^2)$.  Fix $c \in (0, \min(A^1,A^2))$ and we let $C_n = \lfloor C n^{1/\alpha} \rfloor$ where $C >0$ is a large constant depending only on $A$, $B$, $A^1$, $A^2$, $c$.  Next, conditioned on the above, with positive probability (uniform in $n$ but depending on $c >0$) in the next $b_n = n - A_n^1 - C_n$ steps the random walk stays in $R_n = [-c a_n +A_n^1, c a_n + A_n^1] \times [-c a_n + A_n^2,c a_n  + A_n^2] \subseteq \N_0^2$.  Finally, we conclude the proof by showing that we can find a constant $q>0$ which is uniform on $n$ and $(x,y) \in R_n$ such that the following holds.  Conditioned on $(X_{n-C_n},Y_{n-C_n}) = (x,y)$, the random walk stays in $\N_0^2$ for the remaining $C_n$ steps and $(X_n,Y_n) = (0,B_n)$ with probability at least $q^{a_n}$.

We now proceed to show that for for each $c > 0$ the event that the random walk scaled by $a_n$ stays in a fixed square $[-c,c]^2$ during the first $n$ steps has positive probability (uniform in $n$).  We will subsequently prove Proposition~\ref{proposition:lower_bound_conditioning} and then finally give the proof of Theorem~\ref{thm:local_convergence}.

\begin{proposition}
\label{proposition:lower_bound}
For each $c > 0$ there exists $p > 0$ so that
\begin{align*}
 \p_{(0,0)}\!\left[a_n^{-1}(X_k,Y_k) \in [-c,c]^2 \ \text{for}\ 1 \leq k \leq n \right] \geq p \quad\text{for all}\quad n \in \N.
\end{align*}
\end{proposition}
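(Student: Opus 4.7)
The plan is to combine a trivial bound for small $n$ with the scaling limit Theorem~\ref{thm:boundary_length_conv} for large $n$, reducing the question to a small-ball estimate for the limit process $W$. For any fixed $n_0 \in \N$, Lemma~\ref{lem:condition_on_a_k} gives $p_k = C_1 k^{-\alpha-2} > 0$ for every $k \geq 2$; in particular the step $\move_{0,0}=(0,0)$ occurs with probability $p_2 > 0$, and taking $n$ consecutive $\move_{0,0}$ steps pins the walk at the origin. Hence
\begin{align*}
\p_{(0,0)}\!\left[a_n^{-1}(X_k,Y_k) \in [-c,c]^2 \text{ for all } 1 \leq k \leq n\right] \geq p_2^n > 0
\end{align*}
for every $n$ and every $c > 0$, which takes care of all $n \leq n_0$.

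For $n$ large, Theorem~\ref{thm:boundary_length_conv} gives $(a_n^{-1}(X_{nt},Y_{nt}))_{t \in [0,1]} \to W$ in distribution in the Skorokhod space $D([0,1]) \times D([0,1])$. Because the supremum functional $f \mapsto \sup_{t \in [0,1]}|f_t|_\infty$ is Skorokhod continuous (sups are invariant under the time reparametrizations $\lambda \in \Lambda_{0,1}$ defining the topology), the set $\{f : \sup_t|f_t|_\infty < c\}$ is open in the Skorokhod topology, and the portmanteau theorem yields
\begin{align*}
\liminf_{n \to \infty}\p_{(0,0)}\!\left[\sup_{1 \leq k \leq n}a_n^{-1}|(X_k,Y_k)|_\infty < c\right] \geq \p\!\left[\sup_{t \in [0,1]}|W_t|_\infty < c\right].
\end{align*}
Proposition~\ref{proposition:lower_bound} therefore reduces to showing the right-hand side is strictly positive for every $c > 0$; given this, one picks $n_0$ so that the left-hand probability is at least $\tfrac12 \p[\sup_t|W_t|_\infty < c]$ for $n \geq n_0$, and sets $p = \min\{p_2^{n_0},\,\tfrac12 \p[\sup_t|W_t|_\infty < c]\}$.

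The main obstacle is thus the small-ball estimate $\p[\sup_{t \in [0,1]}|W_t|_\infty < c] > 0$ for every $c > 0$. Since $W$ is the martingale form of an $\alpha$-stable L\'evy process with $\alpha \in (1,2)$, it is strictly self-similar (that is, $W_{\lambda t}$ has the law of $\lambda^{1/\alpha} W_t$) with everywhere positive, $C^\infty$ transition densities, so small-ball positivity follows from the classical theory of stable L\'evy processes (see, e.g., \cite{bertoin1996levy}). A self-contained argument would split $\wt{W} = \wt{W}^{\geq \delta} + \wt{W}^{<\delta}$ into its jumps of size $\geq \delta$ and the compensated small-jump martingale, condition on the positive-probability event that $\wt{W}^{\geq \delta}$ has no jump on $[0,1]$, and control the residual martingale $\wt{W}^{<\delta}$ via Doob's maximal inequality together with the finite moment $\int_{|x| \leq \delta}|x|^2\,d\nu(x) < \infty$ (finite because $\alpha < 2$). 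The delicate point is that the effective drift produced by this conditioning diverges like $\delta^{1-\alpha}$ as $\delta \to 0$, so $\delta$ must be tuned to keep this drift comparable to $c$ while still shrinking the martingale fluctuation; this balancing is where the most care is required.
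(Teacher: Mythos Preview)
Your overall framework is the same as the paper's: handle finitely many $n$ trivially, then pass to the limit $W$ via Theorem~\ref{thm:boundary_length_conv} and the portmanteau theorem, reducing everything to the small-ball estimate
\[
\p\!\left[\sup_{t\in[0,1]}|W_t|_\infty < c\right] > 0 \quad\text{for every } c>0.
\]
The gap is in this last step. You offer two routes, and neither is complete.

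First, the appeal to ``classical theory'' with ``everywhere positive $C^\infty$ transition densities'' is not a proof. Positivity of the marginal density of $W_t$ does not by itself give positivity of the \emph{path} event $\{\sup_{[0,1]}|W_t|<c\}$; one still has to produce a uniform Markov-chain iteration, and for a process with one-sided jumps in $\R^2$ this is not a one-line citation to Bertoin. Second, and more importantly, your self-contained sketch (condition on no jumps of size $\geq\delta$ and control the remainder by Doob) cannot be made to work by tuning $\delta$. On the event of no big jumps, $W_t$ equals the small-jump martingale $M^{<\delta}_t$ plus a deterministic drift of order $\delta^{1-\alpha}$, while $M^{<\delta}$ has variance of order $\delta^{2-\alpha}$ on $[0,1]$. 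Making the drift $\lesssim c$ forces $\delta \gtrsim c^{-1/(\alpha-1)}$ (large), whereas making Doob useful forces $\delta^{2-\alpha}\lesssim c^2$ (small); for $\alpha\in(1,2)$ these are incompatible once $c$ is small. So the ``delicate balancing'' you flag is in fact an obstruction, not a technicality.

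The paper resolves this with a genuinely different idea: rather than suppressing the big jumps, it \emph{forces} them to occur in a controlled way. One partitions $[0,1]$ into $n(\epsilon)\asymp \epsilon^{-\alpha/2}$ subintervals and asks that on each subinterval $N^\epsilon$ makes exactly one jump landing within $a(\epsilon)/8$ of the compensator increment $f_\epsilon(t_{j+1})-f_\epsilon(t_j)$. This is possible with uniformly positive probability per subinterval because the compensator points \emph{into} the support of the L\'evy measure (both lie in $\{x<0,y>0\}$), so the L\'evy measure of the target ball is bounded below. This keeps $|N^\epsilon - f_\epsilon|$ small throughout. Independently, the compensated small-jump part $\wt{W}_\epsilon$ is handled not by Doob but by showing that $\wt{W}_\epsilon/\sigma(\epsilon)$ converges weakly to a correlated planar Brownian motion as $\epsilon\to 0$, for which the small-ball probability is trivially positive. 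Combining the two independent pieces gives the estimate for $W$ at scale $c\sigma(\epsilon_0)$, and since $\sigma(\epsilon_0)<1$ this is stronger than the estimate at scale $c$.
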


Let $\wt{W} = (\wt{W}^1,\wt{W}^2)$ be the limiting process from Section~\ref{sec:scaling_limit} (see also Theorem~\ref{thm:boundary_length_conv}) and for $\epsilon >0 $ we set 
\begin{align*}
\wt{W}_{1}^{\epsilon}(t)= \mu_{\epsilon}t+N^{\epsilon}(t) \quad\text{and}\quad \wt{W}_{\epsilon}(t)= \wt{W}(t)-\wt{W}_{1}^{\epsilon}(t)
\end{align*}
where $\Delta \wt{W}(s)$ denotes the jump of $\wt{W}$ at time $s$ and
\begin{align*}
&\mu_{\epsilon}= -\int_{\epsilon \leq |x| \leq 1} x  d\nu(x) \quad\text{and}\quad N^{\epsilon}(t)= \sum_{0 \leq s \leq t}\Delta \wt{W}(s)\one_{|\Delta \wt{W}(s)| \geq \epsilon}.
\end{align*}
Let $\nu$ be the L\'evy measure for $\wt{W}$.  Then $\wt{W}_{\epsilon}$ has characteristic exponent
\begin{align*}
\psi_{\epsilon}(u)= \int_{\R^{2}}(1-e^{i(u,x)}+i(u,x))\one_{|x|\leq \epsilon} d\nu(x),\quad u \in \R^{2}
\end{align*}
and $\wt{W}_{1}^{\epsilon} , \wt{W}_{\epsilon}$ are independent. Let 
\begin{align*}
\sigma(\epsilon)^2 = \int_{|x|^{2}+|y|^{2}\leq \epsilon^{2}}(x^{2}+y^{2}) d\nu(x,y).
\end{align*}
By a change of variables, we observe that 
\begin{equation}
\label{eqn:sigma_form}
\sigma(\epsilon)= \epsilon^{1-\frac{\alpha}{2}} \sigma(1) \quad\text{for all}  \quad \epsilon \in (0,1).
\end{equation}

Now, we consider the process $Y_{\epsilon}(t)= \frac{\wt{W}_{\epsilon}(t)}{\sigma(\epsilon)}$ for $t\in [0,1]$. Note that $Y_{\epsilon}$ has characteristic exponent
\begin{align*}
\phi_{\epsilon}(u)= \int_{|x|\leq \epsilon}\left(1-e^{i\left(\frac{u}{\sigma(\epsilon)},x\right)}+i\left(\tfrac{u}{\sigma(\epsilon)},x\right)\right) d\nu(x)
\end{align*}
 and thus we obtain that $Y_{\epsilon}$ has L\'evy measure $\nu_{\epsilon}$ given by 
\begin{align*} 
d \nu_{\epsilon}(x,y)= C_1 \one_{(-\infty,0)}(x)\one_{(0,\infty)}(y)\one_{x^{2}+y^{2}\leq \frac{\epsilon^{2}}{\sigma(\epsilon)^{2}}}(x,y)\frac{\sigma(\epsilon)^{-\alpha}}{(-x+y)^{\alpha+2}} dxdy.
\end{align*}
We aim to prove that $Y_{\epsilon}$ has a non-trivial limit as $\epsilon \to 0$. For this reason, we mention the following result which gives necessary and sufficient conditions for weak convergence \cite[Theorem~13.14]{kallenberg1997foundations}.

\begin{theorem}
\label{thm:Kalenberg}
 Let $\mu=\id(\alpha,\beta,\nu)$, $\mu_{n}= \id(\alpha_{n},\beta_{n},\nu_{n})$ be a sequence of infinitely divisible laws on~$\R^{d}$ and fix any $0<h<1$ with $\nu({|x|= h})=0$. Then $\mu_{n}$ converges weakly to $\mu$ if and only if 
\begin{align*} 
\lim_{n \to \infty} \alpha_{n}^{h}= \alpha^{h},\quad \lim_{n \to \infty} \beta_{n}^{h}= \beta^{h}
\end{align*}
and $\nu_{n} \to \nu$ vaguely on $\overline{\R^d}\setminus \{0\}$ as $n \to \infty$ where 
\begin{align*}
\alpha^{h}= \alpha+\int_{|x|\leq h} xx^{T} d\nu(x) \quad\text{and}\quad \beta^{h}= \beta-\int_{h\leq |x| \leq 1}x d\nu(x).
\end{align*}
\end{theorem}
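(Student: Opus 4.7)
My plan is to proceed through the Lévy-Khintchine representation combined with Lévy's continuity theorem. The first step is to rewrite the characteristic function of $\mu = \id(\alpha, \beta, \nu)$ in a form adapted to truncation level $h$: one has $\hat\mu(u) = \exp(\psi_h(u))$ with
\[ \psi_h(u) = i(\beta^h, u) - \tfrac{1}{2}(u, \alpha^h u) + \int_{\R^d} G_h(u,x)\, d\nu(x), \]
where $G_h(u,x) = e^{i(u, x)} - 1 - i(u, x)\one_{|x|\leq h} + \tfrac{1}{2}(u, x)^2 \one_{|x|\leq h}$. This representation follows by algebraic rearrangement of the standard ($h=1$) Lévy-Khintchine formula: absorbing $\int_{|x|\leq h} xx^T\, d\nu$ into the Gaussian part yields $\alpha^h$, while compensating for the shift in truncation of the drift yields $\beta^h$. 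The same rewriting applies to each $\mu_n$ with exponent $\psi_n^h$. Since infinitely divisible characteristic functions never vanish and $\hat\mu$ is continuous at $0$, Lévy's continuity theorem reduces $\mu_n \Rightarrow \mu$ to locally uniform convergence $\psi_n^h \to \psi_h$.

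For sufficiency, I would assume the three hypotheses and deduce pointwise convergence of $\psi_n^h$. The Gaussian and drift pieces converge by hypothesis, so the task reduces to $\int G_h(u,x)\, d\nu_n \to \int G_h(u,x)\, d\nu$ for each fixed $u$. Pick $0 < \epsilon < h < R$ avoiding spheres charged by $\nu$ and split the integration into $\{|x| \leq \epsilon\}$, $\{\epsilon < |x| \leq R,\ |x| \neq h\}$, and $\{|x| > R\}$. The middle piece is handled by vague convergence, noting that $G_h(u, \cdot)$ is continuous off $|x| = h$ and can, thanks to the hypothesis $\nu(\{|x|=h\})=0$, be approximated uniformly by continuous functions with negligible error under all $\nu_n$. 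The small-$x$ piece uses the Taylor bound $|G_h(u,x)| \leq C(u)|x|^3$ together with the fact that $\int_{|x|\leq \epsilon} |x|^2\, d\nu_n$ is controlled uniformly in $n$ by the convergence $\alpha_n^h \to \alpha^h$ combined with vague convergence of $\nu_n$ on $\{\epsilon < |x| \leq h\}$. The large-$x$ piece uses $|G_h| \leq 2$ and vague convergence to bound $\nu_n(\{|x|>R\})$ uniformly, which is small for large $R$.

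For necessity, assume $\mu_n \Rightarrow \mu$, so $\psi_n^h \to \psi_h$ locally uniformly. I would first recover vague convergence of $\nu_n$: any continuous $f$ on $\overline{\R^d}\setminus\{0\}$ with support bounded away from $0$ can be expressed as a suitable combination of the oscillatory kernels appearing in the definition of $\psi_n^h$, so convergence of $\int f\, d\nu_n$ follows. Alternatively, $\mathrm{Re}(\psi_n^h(u) - \psi_n^h(0))$ directly isolates the jump-measure contribution away from $0$ and can be integrated against test functions of $u$ to recover $\nu_n$. With vague convergence of $\nu_n$ established, the quantity $\alpha_n^h$ is extracted from the second-order behavior of $\mathrm{Re}\,\psi_n^h$ near $u=0$ (where only the Gaussian plus small-jump variance contributes quadratically), and $\beta_n^h$ from the first-order behavior of $\mathrm{Im}\,\psi_n^h$.

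The main obstacle is the discontinuity of $G_h$ at $|x|=h$: vague convergence of $\nu_n$ does not directly yield convergence of $\int G_h\, d\nu_n$ without care. The hypothesis $\nu(\{|x|=h\})=0$ is precisely what permits a uniform continuous approximation of $G_h$, but one must verify that the approximation error is small in $\nu_n$ uniformly in $n$; this again follows from vague convergence together with the $\alpha_n^h$ hypothesis. A conceptual subtlety, which explains why the theorem is stated in this modified form, is that the original Gaussian covariance $\alpha_n$ and the small-jump variance $\int_{|x|\leq h} xx^T d\nu_n$ cannot be separately recovered from weak convergence of $\mu_n$; only their sum $\alpha_n^h$ is canonically determined, and analogously for $\beta_n^h$.
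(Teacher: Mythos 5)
The paper does not give a proof of this result: it is quoted verbatim as \cite[Theorem~13.14]{kallenberg1997foundations} and used as a black box in the proof of Proposition~\ref{proposition:lower_bound}. So there is no in-paper argument to compare against; I can only assess your sketch on its own terms.

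Your sufficiency direction is sound and follows the standard route. The recentered L\'evy--Khintchine exponent with kernel $G_h(u,x)=e^{i(u,x)}-1-i(u,x)\one_{|x|\le h}+\tfrac12(u,x)^2\one_{|x|\le h}$ is correctly derived by splitting $\one_{|x|\le 1}=\one_{|x|\le h}+\one_{h<|x|\le 1}$ and absorbing the small-jump second moment into the Gaussian covariance, producing exactly $\alpha^h,\beta^h$. The three-way split of the jump integral at radii $\epsilon$ and $R$ works: on $\{|x|\le\epsilon\}$ the cubic Taylor bound $|G_h(u,x)|\le C(u)|x|^3\le C(u)\,\epsilon\,|x|^2$ together with $\int_{|x|\le h}|x|^2\,d\nu_n\le\mathrm{tr}(\alpha_n^h)\to\mathrm{tr}(\alpha^h)$ gives uniform smallness as $\epsilon\to 0$ (one does not need the second moment on $\{|x|\le\epsilon\}$ to vanish, only boundedness, since the extra power of $\epsilon$ does the work); on $\{|x|>R\}$ boundedness of $G_h$ and vague convergence control the tail; and the middle annulus is handled by vague convergence plus $\nu(\{|x|=h\})=0$, choosing $\epsilon,R$ to avoid $\nu$-charged spheres.

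The necessity direction, however, is genuinely incomplete as written. Your two proposed routes to recovering vague convergence of $\nu_n$ from $\psi_n^h\to\psi^h$ locally uniformly are both nontrivial claims stated without justification. The assertion that any continuous $f$ supported away from $0$ ``can be expressed as a suitable combination of the oscillatory kernels'' is not something one can wave at; and $\mathrm{Re}\,\psi_n^h$ mixes the Gaussian and jump contributions, so it does not ``directly isolate'' the jump measure. The standard argument (as in Kallenberg's actual proof, or Jacod--Shiryaev) first establishes relative vague compactness of $(\nu_n)$ on annuli by an averaging/smoothing trick in the $u$-variable that annihilates the quadratic and linear parts of $\psi_n^h$, then passes to a subsequential vague limit and identifies it as $\nu$ by the already-established sufficiency direction, and only afterwards extracts $\alpha_n^h\to\alpha^h$ and $\beta_n^h\to\beta^h$ from the residual quadratic and linear behaviour near $u=0$. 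Without a concrete mechanism for the compactness/identification step your sketch does not close the necessity implication. For the purposes of this paper none of this matters, since only the ``if'' direction is used and the theorem is cited rather than proved, but as a freestanding proof attempt the necessity half needs to be filled in.
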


Now, to prove the weak convergence of $Y_{\epsilon}$, fix $0<h<1.$ Since $\lim_{\epsilon \to 0}\tfrac{\epsilon}{\sigma(\epsilon)}= 0$, we have for $\epsilon >0$ sufficiently small that $\epsilon/\sigma(\epsilon) \leq h$ hence (recall~\eqref{eqn:sigma_form})
\begin{align*}
\int_{x^{2}+y^{2}\leq h^{2}}(x^{2}+y^{2}) d\nu_{\epsilon}(x,y)=  \sigma(\epsilon/\sigma(\epsilon))^2 \cdot \sigma(\epsilon)^{-\alpha} = 1
\end{align*}
and hence by symmetry we obtain that 
\begin{align*}
\lim_{\epsilon \to 0}\int_{x^{2}+y^{2}\leq h^{2}}x^{2} d \nu_{\epsilon}(x,y)= \lim_{\epsilon \to 0}\int_{x^{2}+y^{2}\leq h^{2}}y^{2} d\nu_{\epsilon}(x,y)=\frac{1}{2}.
\end{align*}
We similarly have that
\begin{align*}
\lim_{\epsilon \to 0}\int_{x^{2}+y^{2}\leq h^{2}}xy d\nu_{\epsilon}(x,y)=  r \in [-1/2,0].
\end{align*}
Indeed, the reason that we have $r \geq -1/2$ is that $|xy| \leq x^2/2 + y^2/2$ and we have that $r \leq 0$ because under $\nu_\epsilon$ we a.e.\ have that $x < 0$ and $y > 0$.

Next, we claim that $\nu_{\epsilon} \to 0$ vaguely as $\epsilon \to 0$. Indeed, let $B$ be a Borel subset of $\R^{2}$ with $0 \notin \overline{B}$. Since $\nu_{\epsilon}$ is supported on the disk $B(0, \epsilon / \sigma(\epsilon))$ and $\lim_{\epsilon \to 0}\tfrac{\epsilon}{\sigma(\epsilon)}= 0$, there exists $\epsilon_{0} \in (0,1)$ such that $B \subseteq {\R^2 \setminus B(0,\epsilon / \sigma(\epsilon))}$ and $\nu_{\epsilon}(B)= 0$, for every  $\epsilon \in (0,\epsilon_{0})$.  Hence, $\lim_{\epsilon \to 0}\nu_{\epsilon}(B)= 0$ and this proves the vague convergence. Thus, we also have that 
\begin{align*}
\lim_{\epsilon \to 0}\int_{ x^{2}+y^{2} \geq h} x d\nu_{\epsilon}(x,y)= \lim_{\epsilon \to 0}\int_{ x^{2}+y^{2} \geq h} y d \nu_{\epsilon}(x,y)= 0
\end{align*}

Now, for $ \epsilon \in (0,1)$, if $m_{\epsilon}$ is the law of $Y_{\epsilon}(1)$, then it holds that $m_{\epsilon}= \id(0,0,\nu_{\epsilon})$ and following the notation of Theorem~\ref{thm:Kalenberg},
\begin{align*}
\alpha_{\epsilon}^{h}= \int_{|x|\leq h}xx^{T} d \nu_{\epsilon}(x) \quad\text{and}\quad \beta_{\epsilon}^{h}= -\int_{ |x| \geq h}x d\nu_{\epsilon}(x).
\end{align*}
Then, 
\begin{align*}
\lim_{\epsilon \to 0} \alpha_{\epsilon}^{h}= \alpha^{h} \quad\text{and}\quad \lim_{\epsilon \to 0} \beta_{\epsilon}^{h}= 0 \quad\text{where}\quad \alpha = \begin{pmatrix} 1/2 & r \\ r & 1/2 \end{pmatrix}.
\end{align*}
Therefore, Theorem~\ref{thm:Kalenberg} implies that $m_{\epsilon}$ converges weakly to  $m= \id(\alpha,0,0)$ as $\epsilon \to 0$. Consider the process 
\begin{align*}
R_{t}= (u B_{t}^{1}+ v B_{t}^{2}, v B_{t}^{1}+ u B_{t}^{2})
\end{align*}
where $(B^1,B^2)$ is a standard two-dimensional Brownian motion and 
\begin{align*}
 u = \frac{1}{2}\left(\sqrt{1/2+r}+\sqrt{1/2-r}\right),\quad v= \frac{1}{2}\left(\sqrt{1/2+r}-\sqrt{1/2-r}\right).
\end{align*}
Then $m$ is the law of $R_{1}$ and thus we have that $Y_{\epsilon}(1)$ converges weakly to $R_{1}$ as $\epsilon \to 0$.  Since $Y_{\epsilon}$ is a sequence of processes with independent stationary increments, we obtain that the laws of $(Y_{\epsilon}(t))_{t \in [0,1]}$ converge weakly to the law of $(R_t)_{t \in [0,1]}$ with respect to the Skorokhod topology as $\epsilon \to 0$.

Now, we are ready to prove Proposition~\ref{proposition:lower_bound}.

\begin{proof}[Proof of Proposition~\ref{proposition:lower_bound}]
We know that the pairs $(s,u)$ consisting of a time $s \geq 0$ and a jump size $u$ of $\wt{W}$ is a Poisson point process with intensity measure $\nu \otimes dt$ where $dt$ denotes Lebesgue measure on $\R_+$.  Thus for $ \epsilon , t>0$ it holds for every Borel subset $B$ of $\R^{2} \setminus \{0\}$ that 
\begin{align*}
|\{0\leq s \leq t :\Delta \wt{W}_{s}\in B\}|
\end{align*}
is a Poisson random variable with mean $t\nu(B)$.  Hence letting $B= \R^{2} \setminus \overline{B(0,\epsilon)}$ we have that
\begin{align}
\p[\sup_{0 \leq s \leq t}|\Delta \wt{W}_{s}|\leq \epsilon]
&= \p[|\{0\leq s \leq t: \Delta \wt{W}_{s} \in B\}|=0] \nonumber\\
&= \exp(-t\nu(B))= \exp(-t c_\alpha \epsilon^{-\alpha}) \quad\text{for every}\quad t,\epsilon>0  \label{eq:12}
\end{align}
and $c_\alpha > 0$ is a constant depending only on $\alpha$.  Fix $c > 0$.  Since $R_t$ is a (correlated) two-dimensional Brownian motion there exists $p_{0}>0$ such that 
\begin{align*}
\p[R_t \in [-c,c]^2 \ \text{for all}\ t \in [0,1]]= p_{0}.
\end{align*}
By the weak convergence of $Y_{\epsilon}$ to $R$ as $\epsilon \to 0$, we obtain that there exists $\epsilon_{0}>0$ such that 
\begin{align}\label{eq:13}
 \p[\wt{W}_{\epsilon}(t)\in [-c\sigma(\epsilon)/2, c\sigma(\epsilon)/2]^2 \ \text{for all} \   t \in [0,1]]\geq p_0 / 2 \quad \text{for all} \quad \epsilon \in (0,\epsilon_{0}]
\end{align}

Next,  we focus on the process $\wt{W}_1^{\epsilon}$.  Note that $W_t = \wt{W}_{\epsilon}(t) + N^{\epsilon}(t) - f_{\epsilon}(t)$ for all times $t$ and all $\epsilon \in (0,\epsilon_0]$,  where $f_{\epsilon}(t) = t\int_{|x| \geq \epsilon}xd\nu(x)$.  Note also that $f_{\epsilon}(t) =t \epsilon^{1-\alpha}I$ where $I = \int_{|x| \geq 1} x d\nu(x)$.  We pick $n = n(\epsilon) \in \N$ such that 
\begin{align*}
|f_{\epsilon}(t) - f_{\epsilon}(s)| = |t-s| \epsilon^{1-\alpha}| I| < \frac{a(\epsilon)}{2}
\end{align*}
for all $t,s \in [0,1]$ such that $|t-s| < 1/n$,  where $a(\epsilon) = c \sigma(\epsilon) / 2$.  More precisely,  we set $n = n(\epsilon) = \left \lfloor \frac{4|I|\epsilon^{-\alpha/2}}{c\sigma(1)} \right \rfloor + 1$. Then we have that
\begin{align*}
\p\!\left [ \sup_{0 \leq t \leq 1} |N^{\epsilon}(t) - f_{\epsilon}(t)| < a(\epsilon) \right ] \geq \p\!\left[ \sup_{t_j \leq t \leq t_{j+1}} |N^{\epsilon}(t) - f_{\epsilon}(t_j) | <\frac{a(\epsilon)}{2} \quad \text{for all} \quad  0 \leq j \leq n-1 \right] ,
\end{align*}
where $t_j = j / n$ for $0 \leq j \leq n-1$.  For each $0 \leq j \leq n-1$ we let $B_j$ be the event that $N^\epsilon|_{[t_j,t_{j+1}]}$ makes exactly one jump and $|N^\epsilon(t_{j+1}) - f_\epsilon(t_{j+1})| < a(\epsilon)/8$.  Let also $A_j$ be the event that $N^\epsilon|_{[0,1/n]}$ makes exactly one jump and $|N^\epsilon(1/n) - f_{\epsilon}(t_{j+1})| < a(\epsilon)/8$.  Then, since $N^{\epsilon}$ is a Markov process with stationary and independent increments, we obtain for each $1 \leq m \leq n-1$ that 
\begin{align}
\label{eqn:iteration}
\p\!\left[ \cap_{j=0}^{m} B_j \right ] = \E\!\left[ \left(\prod_{j=0}^{m-1}\one_{B_j} \right ) \p^{N^{\epsilon}(t_{m})}[A_m]  \right].
\end{align}
We claim that there exists $q > 0$ such that 
\begin{align*}
\p^{x}[ A_m] \geq q \quad\text{for all}\quad x \in B(f_{\epsilon}(t_{m}), a(\epsilon) / 8).
\end{align*}
Indeed, this follows because $|f_{\epsilon}(t_{m+1}) - f_{\epsilon}(t_m)|\geq a(\epsilon) / 3$ and $\frac{a(\epsilon)}{3}-\frac{a(\epsilon)}{4} =\frac{a(\epsilon)}{12}$ is larger than $\epsilon$ for all $\epsilon$ sufficiently small  and $\nu(B(f_\epsilon(t_{m+1})-x,a(\epsilon)/8)) > 0$ for all $x \in B(f_\epsilon(t_{m}),a(\epsilon)/8)$.  By decreasing the value of $\epsilon_0 > 0$ if necessary we can assume that this holds for $\epsilon_0$.  Iterating in \eqref{eqn:iteration} gives that
\begin{align}\label{eqn:positive_prob}
p(\epsilon) = \p[ \sup_{0 \leq t \leq 1} |N^{\epsilon}(t) - f_{\epsilon}(t)| < a(\epsilon) ] >0.
\end{align}
Let
\begin{align*}
A &= \{\wt{W}_{\epsilon_0}(t) \in [- c\sigma(\epsilon_0)/2, c\sigma(\epsilon_0)/2]^2\  \text{for all} \  t \in [0,1] \} \quad\text{and}\\
B &=\{\sup_{0 \leq t \leq 1} |N^{\epsilon_0}(t) - f_{\epsilon_0}(t)| < a(\epsilon_0) \}.
\end{align*}
On $A \cap B$ we have that $W_t \in [-c \sigma(\epsilon_0),c \sigma(\epsilon_0)]^2 $ for all $t \in [0,1]$.

By the independence of $\wt{W}_{\epsilon_{0}}$ and $\wt{W}-\wt{W}_{\epsilon_{0}}$, and \eqref{eq:13}, \eqref{eqn:positive_prob}, we obtain that 
\begin{align}\label{eq:14}
 \p_{(0,0)}[W_t \in [-c\sigma(\epsilon_0),c \sigma(\epsilon_0)]^2 \ \text{for all}\  t \in [0,1]] \geq \p[A \cap B] \geq p(\epsilon_0) p_0 / 2.
\end{align}
Finally, by the convergence of  the process $a_n^{-1} (X_{nt},Y_{nt})$ for $t\in [0,1]$ to $(W_{t})_{t \in [0,1]}$ (Theorem~\ref{thm:boundary_length_conv}) and~\eqref{eq:14}, we obtain that there exist $p>0$ and $n_{0}$ such that 
\begin{align*}
 \p_{(0,0)}\left[a_n^{-1} (X_k,Y_k) \in [-c \sigma(\epsilon_0),c \sigma(\epsilon_0)]^2 \ \text{for all} \   1\leq k \leq n \right] \geq p \quad\text{for all} \quad n \geq n_0.
\end{align*} 
By decreasing the value of $\epsilon_0 > 0$ if necessary we may assume that $\sigma(\epsilon_0) < 1$ so that the probability above is only increased if replace $c \sigma(\epsilon_0)$ by $c$.  Moreover, by possibly decreasing the value of $p > 0$ we can take $n_{0}=1$ and altogether this completes the proof.
\end{proof}

Next, we give the proofs of Proposition~\ref{proposition:lower_bound_conditioning} and Theorem~\ref{thm:local_convergence}.

\begin{proof}[Proof of Proposition~\ref{proposition:lower_bound_conditioning}]
Fix $A,B > 0$ and let $A^1,A^2 > 0$ be so that $A = A^1 + A^2$.  Take $c > 0$ as in Proposition~\ref{proposition:lower_bound} and assume that $c < \min(A^1,A^2)$.  Let $A_n$, $A_n^1$, $A_n^2$, $B_n$ be as defined before and let $C_n = \lfloor C n^{1/\alpha} \rfloor$ where $C > A^2 + B + c$. Consider also the following events: 
\begin{align*}
&F_n = \{(X_k - X_{k-1}, Y_k - Y_{k-1}) = (1,-1) \  \text{for all} \  1 \leq k \leq A_n^1\}, \\
&G_n = \{(X_k , Y_k) \in R_n \  \text{for all} \  A_n^1 \leq k \leq n - C_n\},\\
&K_n = \{X_k , Y_k \geq 0 \  \text{for all} \  n - C_n \leq k \leq n, \, X_n = B_n,\, Y_n = 0\},
\end{align*}
where 
\begin{align*}
R_n = [-c a_n + A_n^1, c a_n + A_n^1] \times [-c a_n + A_n^2, c a_n + A_n^2].
\end{align*}
Note that since $c < \min(A^1,A^2)$ we have that $R_n \subseteq \N_0^2$ for all $n$ large enough.  For $n$ sufficiently large, we have that 
\begin{align}\label{eq:15}
\p_{(0,A_n)}[ X_k , Y_k \geq 0 \  \text{for all} \  1 \leq k \leq n,\, X_n = B_n, \, Y_n = 0] \geq \p_{(0, A_n)}[ F_n \cap G_n \cap K_n ]
\end{align}
and the right hand side of~\eqref{eq:15} is bounded from below by 
\begin{align}\label{eq:16}
\min_{(x,y) \in R_n}\p[ K_n \,| \, (X_{n- C_n}, Y_{n - C_n}) =(x,y)] \p_{(A_n^1, A_n^2)}[ G_n ] p_{0}^{ A_n^1}
\end{align}
The middle term of the product in \eqref{eq:16} is bounded below uniformly in $n$ by a constant $p > 0$ by Proposition~\ref{proposition:lower_bound}.  We turn to give a lower bound for the first term on the left hand side of~\eqref{eq:16}.  Suppose that the walk starts from $(x,y) \in R_n$.
\begin{itemize}
\item If it has an increment of type $(0,B_n)$ (i.e., a $\move_{0,B_n}$ move) it ends up at $(x,y+B_n)$.
\item If it then has $y+B_n$ increments of type $(1,-1)$ (i.e., $\move_e$ moves) it ends up at $(x+y+B_n,0)$.
\item If it then has an increment of type $(-x-y,0)$ (i.e., a $\move_{x+y,0}$ move) it ends up at $(B_n,0)$.
\item If it then has $C_n - (y+B_n) - 2$ increments of type $(0,0)$ (i.e., $\move_{0,0}$ moves) it altogether ends up at $(B_n,0)$ after $C_n$ steps.  Note that since $C > A^2 + B + c$ and $y \leq (c + A^2) n^{1/\alpha} + 1$ as $(x,y) \in R_n$ we have that $C_n - (y+B_n) - 2 \geq 0$ for all $n$ large enough.
\end{itemize}
We thus see that if $(x,y) \in R_n$ then we have that
\begin{align}\label{eq:17}
\p[K_n \, | \, X_{n - C_n} = x,\, Y_{n- C_n} = y ] \geq p_{B_n+2} \cdot p_0^{y+B_n} \cdot p_{x+y+2} \cdot p_2^{C_n-(y+B_n+2)}.
\end{align}
Note that $p_{B_n+2}$ and $p_{x+y+2}$ for $(x,y) \in R_n$ are both at least a constant times $n^{-1-2/\alpha}$.  Altogether, we see from~\eqref{eq:17} that there exists $q \in (0,1)$ such that 
\begin{align}
\label{eq:18}
\min_{x,y \in R_n}\p[ K_n \,| \, (X_{n- C_n}, Y_{n - C_n}) =(x,y)] \geq q^{a_n}.
\end{align}
Then the proof is complete by combining~\eqref{eq:15}, \eqref{eq:16}, and~\eqref{eq:18} with Proposition~\ref{proposition:lower_bound}.
\end{proof}

\begin{proof}[Proof of Theorem~\ref{thm:local_convergence}]
Fix $b = (b_0,\ldots,b_{2r}) \in S^{2r+1}$ and for $0\leq i\leq 2r$, $m \in \N$, we define 
\begin{align*}
A_{m}^{i}= \{\omega \in S^{\N_{0}}:\omega(i+j)= b_{j-(m-1)(2r+1)} \  \text{for all} \  (m-1)(2r+1)\leq j \leq m(2r+1)-1\}
\end{align*}
and set
\begin{align*}
C_{k}^{i}= \sum_{m=1}^{k}\one_{A_{m}^{i}}.
\end{align*}
Then for each $0\leq i \leq 2r$ we have that the sequence $(\one_{A_{m}^{i}})_{m \in \N}$ is i.i.d.\ with mean $p= \mu_{2r+1}(b)$. By Cramer's theorem \cite{dz2010ld}, there exists a constant $c= c(p,\epsilon) > 0$ depending only on $p$ and $\epsilon$ such that 
\begin{align}
\label{eq:21}
\p\!\left[ \left |\frac{C_{k}^{i}}{k}-p\right | \geq \epsilon \right] \leq e^{-c k} \quad  \text{for all} \quad  k \in \N.
\end{align}

Let
\begin{align*}
F_n = \{ X_k,\, Y_k \geq 0 \ \text{for all}\  1 \leq k \leq n,X_n = B_n, Y_n = 0\}.
\end{align*}
If $k_{n}\in \N$, $0 \leq \ell_{n}\leq 2r$, and $n - 2r= \ell_{n}+k_{n}(2r+1)$ then we have that 
\begin{align*}
m_{n,2r+1}(b)&= \frac{1}{n-2r+1}\sum_{i= 0}^{2r}\sum_{k= 1}^{k_{n}}\p_{(0,A_n)}[A_{k}^{i} \, | \,F_{n}]
 +\frac{1}{n-2r+1}\sum_{i= 0}^{\ell_{n}}\p_{(0,A_n)}[A_{k_{n}+1}^{i} \, |\,F_{n}].
\end{align*}
Hence, it holds that 
\begin{align}\label{eq:22}
&|m_{n,2r+1}(b)-\mu_{2r+1}(b)| \nonumber \\
\leq& \frac{k_{n}}{n-2r+1}\sum_{i= 0}^{2r}\E_{(0,A_n)}\left [\left | \frac{C_{k_{n}}^{i}}{k_{n}}-p \right |\, | \,F_n \right ] + \frac{1}{n-2r+1}\sum_{i= 0}^{\ell_{n}}|\p_{(0,A_n)}[A_{k_{n} + 1}^{i} \,| \, F_n ] - p | \nonumber \\
\leq& \frac{k_n}{n-2r+1}\sum_{i=0}^{2r}\E_{(0,A_n)} \left [ \left | \frac{C_{k_n}^{i}}{k_n} - p \right | \, | \, F_n \right ] + \frac{2(2r+1)}{n-2r+1}.
\end{align}
Also, Proposition~\ref{proposition:lower_bound_conditioning} implies that there exists a constant $q \in (0,1)$ such that 
\begin{align*}
\p_{(0,A_n)}[ F_n ] \geq q^{a_n} \quad\text{for all}\quad n \in \N
\end{align*}
and so \eqref{eq:21} implies that
\begin{align}\label{eq:23}
\E_{(0,A_n)}\left [ \left | \frac{C_{k_n}^i}{k_n} - p \right | \, | \, F_n \right] &\leq \epsilon + (1+p) \p_{(0,A_n)}\left [ \left | \frac{C_{k_n}^i}{k_n} - p \right | \geq \epsilon \, | \, F_n \right] \nonumber \\
&\leq \epsilon + (1 + p) e^{-c k_n} q^{-a_n}.
\end{align}
In the above, we bounded the expectation by considering the cases that $|C_{k_n}^i/k_n - p|$ is either smaller or larger than $\epsilon$ and also used the upper bound $|C_{k_n}^i/k_n - p| \leq 1+p$.  Therefore~\eqref{eq:22} and~\eqref{eq:23} imply that $\lim_{n \to \infty} m_{n,2r+1}(b) = \mu_{2r+1}(b)$ and so 
\begin{align}\label{eq:24}
\lim_{n \to \infty} m_{n,2r+1}(C) = \mu_{2r+1}(C) \quad\text{for all} \quad C \subseteq {S^{2r+1}} \quad\text{finite}.
\end{align}

Fix $\delta>0$. Then there exist $i_{0},j_{0}\in \N$ such that $ \mu_{2r+1}(B) \geq 1 - \delta$ where 
\begin{align*}
B= \{(1,-1),\,(-i,j),\,0\leq i \leq i_{0} , \,0\leq j \leq j_{0}\}
\end{align*}
Since $B$ is finite, \eqref{eq:24} implies for all $n$ sufficiently large that 
\begin{align*}
|m_{n,2r+1}(C)-\mu_{2r+1}(C)|\leq \delta \quad\text{for all}\quad C \subseteq B.
\end{align*} 
Moreover, we note for all $n$ sufficiently large that 
\begin{align*}
|m_{n,2r+1}(B^{c})-\mu_{2r+1}(B^{c})| = |m_{n,2r+1}(B)-\mu_{2r+1}(B)| \leq \delta. 
\end{align*}
Then for $C \subseteq S^{2r+1}$ and all $n$ sufficiently large we have that 
\begin{align*}
|m_{n,2r+1}(C)-\mu_{2r+1}(C)|&\leq |m_{n,2r+1}(C\cap{B})-\mu_{2r+1}(C\cap{B})|+|m_{n,2r+1}(C\cap{B^{c}})-\mu_{2r+1}(C\cap{B^{c}})|\\ 
&\leq \delta +m_{n,2r+1}(B^{c})+\mu_{2r+1}(B^{c})\\
&\leq \delta +|m_{n,2r+1}(B^{c})-\mu_{2r+1}(B^{c})|+2\mu_{2r+1}(B^{c})\\
& \leq 4\delta. 
\end{align*}
This completes the proof as $\delta>0$ was arbitrary.
\end{proof}

\section{Benjamini-Schramm convergence}
\label{sec:B-S_convergence}

We suppose that we have the same setup as in the beginning of Section~\ref{sec:total_variation}.  That is, we fix $\alpha \in (1,2)$ and assume that we have chosen weights $\weight_k$ satisfying the statement of Lemma~\ref{lem:condition_on_a_k}.  Fix $A, B > 0$, let $A_n = \lfloor A n^{1/\alpha} \rfloor$, and $B_n = \lfloor B n^{1/\alpha} \rfloor$.  Let $G_n$ be a $\BPRPM$ with $n+1$ edges sampled with probability proportional to its weight and conditioned such that it has $A_n + 1$ (resp.\ $B_n + 1$) edges on its western (resp.\ eastern) boundary. Let $w_{0}^{n},\ldots,w_{n-1}^{n} \in S$ be the steps which encode $G_n$. Let also $(U_n)$ be a sequence of independent random variables where $U_n$ is uniform in $\{0,\ldots,n-1\}$ and we assume that $(U_n)$ is independent of $(G_n)$. Let $\rho_n$ be the active vertex corresponding to the step~$w_{U_{n}}^{n}$ during the construction of $G_{n}$.  Then $(G_{n},\rho_{n})$ is a rooted random graph. In this section, we are going to prove Theorem~\ref{thm:benjamini_schramm_conv} which we recall asserts that $(G_{n},\rho_{n})$ converges in the Benjamini-Schramm sense (Definition~\ref{def:graph_convergence}) to an infinite volume rooted random planar map $(G,\rho)$ which is encoded by a bi-infinite sequence $(w_k)_{k \in \Z}$ of i.i.d.\ steps in $S$ chosen using the probabilities $p_{0}$, $p_{i+j+2}$ for $i,j \in \N_{0}$.

\subsection{Definition of the infinite volume $\BPRPM$}

Let us now explain how $(G,\rho)$ is constructed from $(w_k)_{k \in \Z}$. Suppose that we have an infinite boundary in the plane consisting of vertices and edges along with a root vertex such that all the vertices to the left of the root vertex are not part of any edge while all the adjacent vertices on the boundary and to the right of the root vertex are connected by an edge. Concretely, we imagine that the boundary is the real line and the vertices are the integers where we take $0$ to be the root vertex. Then for $i\in \N_{0}$ there exists exactly one (directed) edge emanating from $i$ connecting it with $i+1$ and for $i \in \Z$ with $i < 0$ there are no edges emanating from $i$.  We will construct a planar map drawn in the upper half-plane with the above boundary encoded by the sequence of moves $(w_k)_{k \in \N_0}$.  Similarly, we will construct a planar map drawn in the lower half-plane with the above boundary encoded by the sequence of moves $(w_{-k})_{k \in \N}$.

We will first describe the construction of the part of the planar map drawn in the upper half-plane. We start with $0$ as the active vertex.  An $\move_{e}$ move will sew an edge to the current boundary from and to the left of the active vertex and then move the active vertex to the upper endpoint of the added edge (i.e., to the left).  An $\move_{i,j}$ move will sew an open face with $i+1$ edges on its west and $j+1$ edges on its east where the north of the face is sewn to the active vertex and the west of the face is sewn to the boundary consisting of the vertices to the right of the active vertex.  We then sew a new edge to the southernmost east edge of the added face.  Then the new active vertex is the upper vertex of this edge.  Repeating the above steps, we construct an infinite volume planar map drawn in the upper half-plane.

We will now describe the construction of the part of the planar map which is drawn in the lower half-plane.  The construction is given by performing the ``reverse'' of the procedure described in the previous paragraph.  As before, we take $0$ to be our initial active vertex.  An $\move_{e}$ move removes the edge from the current boundary of the planar map which connects the active vertex to the vertex which is immediately to its right and moves the active vertex to the lower endpoint of the removed edge.  An $\move_{i,j}$ move sews an open face with $i+1$ edges on its west and $j+1$ edges on its east, sewing the south of the face to the vertex exactly to the right of the active vertex and the east of the face to the part of the boundary consisting of the vertices lying to the left of the vertex which lies exactly to the right of the active vertex. The new active vertex will be the north of the added face.  To complete the construction of~$G$, we repeat the above steps to obtain a planar map which is drawn in the lower half-plane.

We note that the above construction is translation invariant, i.e.,  for any $m \in \Z$ the random planar maps encoded by the sequences of steps $(w_{k+m})_{k \in \Z}$ and $(w_k)_{k \in \Z}$ have the same distribution.

\subsection{Ball absorption time bounds}

In the forward construction, during an $\move_{i,j}$ step, all the vertices strictly between the lower endpoint of the edge we added and the old active vertex will be absorbed in the interior of the structure lying below the new boundary that has been formed.  In every step, we keep track of the distance along the boundary of each vertex in the boundary from the active vertex.  After an~$\move_{e}$ move, the distance of each vertex to the left (resp.\ right) of the old active vertex will decrease (resp.\ increase) by $1$.  After an $\move_{i,j}$ move, vertices to the right of the old active vertex and at distance between $1$ and $i$ will be absorbed into the interior of the structure.  The distance of a vertex to the right of and at distance at least $i+1$ from the old active vertex will  decrease by $i$.  The distance of a vertex to the left of the old active vertex will increase by $j$.

Similarly, in the reverse construction, we keep track of the distance along the boundary of each boundary vertex from the active vertex. After an $\move_{e}$ move, the distance of every vertex which is to the left (resp.\ right) of the old active vertex will increase (resp.\ decrease) by $1$.  After an $\move_{i,j}$ step, the distance of a vertex to the right of the old active vertex increases by $i$ while the distance of a vertex which is to the left of and with distance to the old active vertex at least $j$ will decrease by~$j$.  Vertices which are to the left of the old active vertex and with distance at most $j-1$ will be absorbed into the interior of the structure.

Clearly, there is a bijection between the construction of the planar map encoded by $(w_k)_{k \in \Z}$ and a lattice walk $(X_k,Y_k)_{k \in \Z}$ normalized so that $X_0 = 0$, $Y_0 = 0$, where for each $k \in \Z$ the increment $(X_k - X_{k-1},Y_k-Y_{k-1})$ is $(1,-1)$ if $w_k = \move_e$ and is $(-i,j)$ if $w_k = \move_{i,j}$.

Next, we examine the time interval during which a vertex stays on the boundary before it gets absorbed during the forward construction.  Suppose that $v$ is a vertex on the boundary.  Let $T_1$ be the first time that $v$ is either the active vertex or $v$ is absorbed into the interior.  Assume that we have defined $T_1,\ldots,T_m$.  We let $T_{m+1}$ be equal to the first time after $T_m$ that $v$ is either absorbed in the interior or is equal to the active vertex.  We take the convention that if $v$ is in the interior at the time $T_m$ then $T_{m+1} = T_m$.  We claim that $\p[ T_m < \infty] = 1$ for all $m \in \N$ and that if $N$ is the smallest $m$ so that $v$ is absorbed in the interior at the time $T_m$ then $\p[N < \infty]$.  Once we have proved these claims, we will have shown that $v$ is a.s.\ absorbed into the interior.

Let us first prove that $\p[T_1 < \infty] = 1$.  Assume that $v$ is to the left of and has distance $j$ to the active vertex.  Then $T_1 = \infty$ implies that $Y_k \geq -j$ for all $k \in \N$.  Fix $\epsilon > 0$.  Then for $n \in \N$ large enough we have that
\begin{equation}
\label{eqn:t1_finite}
\p[ \min_{1 \leq k \leq n} Y_k \geq -j] \leq  \p[ a_n^{-1} \inf_{0 \leq t \leq 1} Y_{t n} \geq -\epsilon ].
\end{equation}
As $n \to \infty$, we have that $(a_n^{-1} Y_{t n})_{t \in [0,1]}$ converges to $(W_t^2)_{t \in [0,1]}$.  Therefore the right hand side of~\eqref{eqn:t1_finite} converges to $\p[ \inf_{0 \leq t \leq 1} W_t^2 \geq -\epsilon]$ as $n \to \infty$.  Since $\p[ \inf_{0 \leq t \leq 1} W_t^2 \geq -\epsilon]$ tends to $0$ as $\epsilon \to 0$ \cite[Chapter~VIII, Proposition~2]{bertoin1996levy}, we conclude that $\p[ \min_{1 \leq k \leq n} Y_k \geq -j] \to 0$ as $n \to \infty$.  Therefore $\p[T_1 < \infty] = 1$.

In the next time step after $T_1$, if $v$ has not been absorbed into the interior then $v$ will either be to the left or to the right of the active vertex.  If it is to the left of the active vertex, then the same argument given above implies that $T_2 < \infty$ a.s.  If it is to the right of the active vertex, then the event that $T_2 < \infty$ corresponds to the event that $X_{k+T_1} - X_{T_1}$ for $k \in \N_0$ goes below $-1$.  Since $X_{k+T_1} - X_{T_1}$ has the same distribution as $X_k$, it suffices to work with the process $X_k$.  Fix $\epsilon > 0$.  Then for $n \in \N$ large enough we have that
\begin{equation}
\label{eqn:t2_finite}
\p[ \min_{1 \leq k \leq n} X_k \geq -1] \leq  \p[ a_n^{-1} \inf_{0 \leq t \leq 1} X_{t n} \geq -\epsilon].
\end{equation}
As $n \to \infty$, we have that $(a_n^{-1} X_{t n})_{t \in [0,1]}$ converges to $(W_t^1)_{t \in [0,1]}$.  Therefore the right hand side of~\eqref{eqn:t2_finite} converges to $\p[ \inf_{0 \leq t \leq 1} W_t^1 \geq -\epsilon]$ as $n \to \infty$.  Since $\p[ \inf_{0 \leq t \leq 1} W_t^1 \geq -\epsilon]$ tends to $0$ as $\epsilon \to 0$ \cite[Chapter~VIII, Proposition~2]{bertoin1996levy}, we conclude that $\p[ \min_{1 \leq k \leq n} X_k \geq -1] \to 0$ as $n \to \infty$.  Altogether, we have shown that $\p[T_2 < \infty] = 1$.  Applying this inductively, we see that $\p[ T_m < \infty] = 1$ for all $m \in \N$.

It is left to explain why $\p[N < \infty] = 1$.  This, however, is easy to see because each time $v$ is the active vertex there is a positive chance that the next two moves are $\move_e$, $\move_{3,1}$ in which case $v$ is absorbed into the interior.  That is, $N$ is stochastically dominated by a geometric random variable.  This completes the proof of the two claims.

\newcommand{\off}{\mathrm{off}}

Let $T_{v,\off} = T_N$ where $(T_m)$, $N$ are as defined above.  Then we have shown that $\p[ T_{v,\off} < \infty] = 1$.   We also let $\wh{T}_{v,\off}$ be the first time that $v$ is absorbed into the interior of the structure when performing the reverse procedure.  Then arguing as above we also have that $\p[\wh{T}_{v,\off} < \infty] = 1$.

Now, let $\rho$ be the active vertex corresponding to the move $w_{0}$ of the infinite volume random planar map. Then $(G,\rho)$ is a random rooted graph.  In what follows, for integers $m \leq n$ we will sometimes use the notation $(w_k)_{k=m}^n$ to denote both the sequence of moves $(w_m,\ldots,w_n)$ and the part of $G$ which is encoded by the sequence $(w_m,\ldots,w_n)$.  We will use the notation $(w_k)_{k \geq m}$ and $(w_k)_{k \leq m}$ in the same way but with respect to the sequences of moves $(w_m,\ldots)$ and $(\ldots,w_m)$, respectively.

\begin{proposition}
\label{prop:local_finite}
For each $\epsilon>0, r\in \N$ there exists $m \in \N$ such that 
\begin{align*}
\p[B_{G}(\rho,r)\subseteq (w_k)_{k=-m}^m,\,A_{m,r}] \geq 1- \epsilon
\end{align*}
 where $B_{G}(\rho,r)$ is the ball of $G$ of radius $r$ centered at $\rho$ with respect to the graph distance and $A_{m,r}$ is the event that every vertex in $B_{G}(\rho,r)$ has been absorbed in the interior of $(w_k)_{k=-m}^m$.
\end{proposition}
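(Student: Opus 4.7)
The plan is to reduce the proposition to an almost-sure statement and then pass to probability via continuity. More precisely, I will show that almost surely, for every $r \in \N$, there exists a (random) finite integer $M_r$ such that $B_G(\rho,r) \subseteq (w_k)_{k=-M_r}^{M_r}$ and every vertex of $B_G(\rho,r)$ is in the interior of the partial map $(w_k)_{k=-M_r}^{M_r}$. Once this is established, the proposition follows: since $M_r$ is a.s.\ finite, $\p[M_r \le m] \to 1$ as $m \to \infty$, so for any $\epsilon > 0$ we can choose $m$ large enough that the event $\{M_r \le m\}$ (which is contained in the event of the proposition) has probability at least $1-\epsilon$.

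The almost-sure claim will be proved by induction on $r$. For the base case $r=0$, we have $B_G(\rho,0) = \{\rho\}$, and from the discussion preceding the proposition both $T_{\rho,\off}$ (forward absorption) and $\wh{T}_{\rho,\off}$ (backward absorption) are a.s.\ finite, so take $M_0 = \max(T_{\rho,\off}, \wh{T}_{\rho,\off})$. For the inductive step, a key preliminary is that every vertex $v$ in $G$ has a.s.\ finite degree: edges incident to $v$ are added only during moves in which $v$ lies on the boundary in the forward or backward construction, and the a.s.\ finiteness of $T_{v,\off}$ and $\wh{T}_{v,\off}$ bounds the number of such moves. Given the inductive hypothesis, every $v \in B_G(\rho,r)$ has been absorbed within $(w_k)_{k=-M_r}^{M_r}$, so every edge of $G$ incident to $v$ was added by one of those moves, and hence every $G$-neighbor of $v$ is already present as a vertex in $(w_k)_{k=-M_r}^{M_r}$. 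Therefore
\[
B_G(\rho,r+1) \subseteq B_G(\rho,r) \cup \bigcup_{v \in B_G(\rho,r)} N_G(v),
\]
which is a finite union of finite sets and hence finite, and all of its vertices appear in the partial map built from $(w_k)_{k=-M_r}^{M_r}$. The new vertices in $B_G(\rho,r+1) \setminus B_G(\rho,r)$ may not yet be absorbed, but each has a.s.\ finite forward and backward absorption times (by translation invariance of the infinite volume construction together with the absorption results preceding the proposition). Setting $M_{r+1}$ to be the maximum of $M_r$ and these finitely many absorption times yields an a.s.\ finite $M_{r+1}$ with the required properties.

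The main obstacle, in my view, is the a.s.\ finite-degree claim and the observation that once $v$ is absorbed (in both constructions) no further $G$-edges can be attached to it, which is what guarantees that the neighborhood of $B_G(\rho,r)$ is already exhausted inside $(w_k)_{k=-M_r}^{M_r}$. This requires some care because absorption in the forward and backward constructions are separate events, and a vertex could in principle lie on the initial real-line boundary and be accessible from both sides; however, the a.s.\ finiteness of both $T_{v,\off}$ and $\wh{T}_{v,\off}$, together with the fact that each single move adds only finitely many edges, handles this cleanly. The rest of the argument is a routine induction combined with the continuity-of-probability reduction described above.
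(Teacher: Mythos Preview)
Your proposal is correct and follows essentially the same inductive approach as the paper's proof: both arguments use that once a vertex is absorbed in both the forward and backward constructions no further $G$-edges can be attached to it, so that $B_G(\rho,r+1)$ lies in the same finite window that already absorbed $B_G(\rho,r)$, and then one enlarges the window to absorb the finitely many new vertices (invoking translation invariance for their absorption times). The only cosmetic differences are that you start the induction at $r=0$ rather than $r=1$ and that you phrase things via an a.s.\ finite random $M_r$ before passing to probability, whereas the paper works with probabilities throughout; neither changes the substance.
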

\begin{proof}
We will prove the claim using induction on $r$. We first consider the case $r = 1$.  Fix $\epsilon \in (0,1)$. Note that for $m \in \N$, if the event $C_m = \{T_{\rho,\text{off}} \leq m,\ \wh{T}_{\rho,\text{off}} \leq m\}$ occurs (recall the notation used to analyze the boundary behavior of $G$), the vertex $\rho$ gets absorbed in the interior of the planar map encoded by the sequence of steps $(w_k)_{k=-m}^m$. Since in the construction of the map neighbors are added to~$\rho$ only as long as it stays on the boundary, if $C_m$ occurs, we have that
\begin{align*}
B_G(\rho,1) \subseteq (w_k)_{k=-m}^m.
\end{align*}
We therefore have that $\p[ B_G(\rho,1) \subseteq (w_k)_{k=-m}^m] \to 1$ as $m \to \infty$.  Take $m$ sufficiently large so that $\p[ B_G(\rho,1) \subseteq (w_k)_{k=-m}^m] \geq 1-\epsilon$.  Then since each of the steps $(w_k)_{k=-m}^m$ involves adding at most finitely many vertices, the set of vertices on the boundary of the map encoded by $(w_k)_{k \geq -m}$ which are in $B_G(\rho,1)$ is finite.  Likewise, the set of vertices on the boundary of the map encoded by $(w_k)_{k \leq m}$ which are in $B_G(\rho,1)$ is also finite.  By the translation invariance of the law of the map encoded by $(w_k)_{k \in \Z}$, the amount of additional time after a vertex is first discovered by the exploration that it gets absorbed into the interior is a.s.\ finite.  Altogether, this implies that we can increase the value of $m$ if necessary so that $\p[ B_G(\rho,1) \subseteq (w_k)_{k=-m}^m, A_{m,1}] \geq 1-\epsilon$.

Suppose that $R \in \N$ and the result holds for all $1 \leq r \leq R$ and $\epsilon > 0$.  This implies that $\p[ B_G(\rho,R) \subseteq (w_k)_{k=-m}^m, A_{m,R}] \to 1$ as $m \to \infty$.  Suppose $m \in \N$ is such that $B_G(\rho,R) \subseteq (w_k)_{k=-m}^m$ and $A_{m,R}$ occur.  Then since each of the steps $(w_k)_{k=-m}^m$ involves adding at most finitely many vertices, the set of vertices on the boundary of the map encoded by $(w_k)_{k \geq -m}$ which are in $B_G(\rho,R+1)$ is finite.  Likewise, the set of vertices on the boundary of the map encoded by $(w_k)_{k \leq m}$ which are in $B_G(\rho,R+1)$ is also finite.  By the translation invariance of the law of the map encoded by $(w_k)_{k \in \Z}$, the amount of additional time after a vertex is first discovered by the exploration that it gets absorbed into the interior is a.s.\ finite.  Altogether, this implies that we can increase the value of $m$ if necessary so that $\p[ B_G(\rho,R+1) \subseteq (w_k)_{k=-m}^m, A_{m,R+1}] \geq 1-\epsilon$.
\end{proof}

\begin{remark}
An immediate consequence of Proposition~\ref{prop:local_finite} is that  $B_{G}(\rho,r)$ is finite for all $r\in \N$ a.s.  Moreover, by the translation invariance of the infinite volume planar map and the above observation, we obtain that $B_{G}(v,r)$ is finite for all vertices $v$ and $r \in \N$.
\end{remark}

\subsection{Conclusion of the proof}
We are now ready to prove the Benjamini-Schramm convergence of the sequence of random rooted graphs $(G_n,\rho_n)$ to $(G,\rho)$ as $n \to \infty$.

\begin{proof}[Proof of Theorem~\ref{thm:benjamini_schramm_conv}]
Fix $r,\epsilon>0$ and a connected rooted graph $(H,\rho')$.  Let $A_{m,r}$ be as in Proposition~\ref{prop:local_finite} and let
\begin{align*}
B_{m,r}= A_{m,r} \cap \{B_{G}(\rho,r)\subseteq (w_k)_{k=-m}^m \} \quad\text{and}\quad C_{m,r} = B_{m,r} \cap \{B_{G}(\rho,r)=(H,\rho')\}.
\end{align*}
By Proposition~\ref{prop:local_finite}, there exists $m \in \N$ such that $\p[B_{m,r}] \geq 1 - \epsilon$.  We also observe that for all $n \in \N$, if we condition on the event $U_{n} \in [m,n-m]$, we have that if $(w_k^n)_{k=U_n-m}^{U_n+m} \in C_{m,r}$ then $B_{G_{n}}(\rho_{n},r)= (H,\rho')$ and if $(w_k^n)_{k=U_n-m}^{U_n+m} \in B_{m,r}$ then 
\begin{align*}
B_{G_{n}}(\rho_{n},r)\subseteq (w_k^n)_{k=U_n-m}^{U_n+m} 
\end{align*}
and every vertex of $B_{G_{n}}(\rho_{n},r)$ has been absorbed in the interior of $G_{n}$ during these steps.  By combining the above observations and for $n$ sufficiently large, we have that 
\begin{align}
\label{eq:36}
&\p[B_{G_n}(\rho_n, r) = (H,\rho')] \nonumber\\
\leq&\frac{n-2m}{n}\p[\{B_{G_n}(\rho_n,r) = (H,\rho')\} \cap \{ (w_k^n)_{k=U_n-m}^{U_n+m} \in B_{m,r}^c \}\,|\,U_{n} \in [m,n-m]] + \nonumber \\
&\frac{n-2m}{n}\p[\{B_{G_{n}}(\rho_n,r) = (H,\rho')\} \cap \{ (w_k^n)_{k=U_n-m}^{U_n+m} \in B_{m,r}\} \,|\, U_{n} \in [m,n-m]] + \frac{2m}{n} \nonumber \\
\leq& \nu_{n}^m(B_{m,r}^c) + \nu_{n}^m (C_{m,r}) + \frac{2m}{n}
\end{align}
where $\nu_{n}^m$ is the law of $(G_n,\rho_n)$ conditioned on the event $\{U_n \in [m,n-m] \}$. As $n \to \infty$, Theorem~\ref{thm:local_convergence} implies that the right hand side of \eqref{eq:36} converges to $\mu_{2m+1}(B_{m,r}^{c})+\mu_{2m+1}(C_{m,r})$ (recall the definition of the measures $\mu_{m}$ from Section~\ref{sec:total_variation}) and hence 
\begin{align}\label{eq:37}
\limsup_{n\to \infty}\p[B_{G_{n}}(\rho_{n},r)= (H,\rho')]&\leq \mu_{2m+1}(B_{m,r}^{c})+\mu_{2m+1}(C_{m,r})
 \leq \epsilon +\mu_{2m+1}(C_{m,r})
\end{align}
 and
\begin{align}
\label{eq:38}
\mu_{2m+1}(C_{m,r})\leq \liminf\limits_{n\to \infty}\p[B_{G_{n}}(\rho_{n},r)= (H,\rho')]
\end{align}
since $\mu_{2m+1}(B_{m,r}) = \p[B_{m,r}]$. Moreover
\begin{align}\label{eq:39}
\mu_{2m+1}(C_{m,r})\leq \p[B_{G}(\rho,r)= (H,\rho')]&\leq \mu_{2m+1}(C_{m,r})+\mu_{2m+1}(B_{m,r}^{c}) 
 \leq \epsilon +\mu_{2m+1}(C_{m,r})
\end{align}
Since $\epsilon > 0$ was arbitrary, \eqref{eq:37}, \eqref{eq:38} and \eqref{eq:39} imply that 
\begin{align*}
\p[B_{G}(\rho,r)= (H,\rho')]= \lim_{n\to \infty}\p[B_{G_{n}}(\rho_{n},r)= (H,\rho')]
\end{align*}
This completes the proof.
\end{proof}

\bibliographystyle{abbrv}
\bibliography{biblio}

\end{document}